\documentclass[12pt,reqno]{amsart}
\usepackage[T1]{fontenc}
\usepackage[utf8]{inputenc}
\usepackage{amssymb,latexsym,amsthm,amsfonts,amscd, enumerate,ragged2e, comment}
\usepackage{amsmath}
\usepackage{aliascnt}
\usepackage{pstricks-add}
\usepackage{graphics,pstricks}
\usepackage{pgf,tikz}
\usepackage{mathrsfs}
\usepackage{fullpage}
\usepackage{microtype,textcomp,csquotes,newpxtext}
\usetikzlibrary{arrows}
\usepackage[dvipsnames]{xcolor}
\definecolor{darkblue}{rgb}{0,0,0.7}
\definecolor{darkred}{rgb}{0.7,0,0}
\usepackage[colorlinks=true, linkcolor=darkred, citecolor=darkblue, urlcolor=blue, pagebackref=true, breaklinks=true]{hyperref}
\usepackage[capitalise]{cleveref}
\usepackage{enumitem}
\usepackage[margin=0.9in]{geometry}

\newtheorem{proposition}{Proposition}[section]

\newaliascnt{lemma}{proposition}
\newtheorem{lemma}[lemma]{Lemma}
\aliascntresetthe{lemma}

\newaliascnt{theorem}{proposition}
\newtheorem{theorem}[theorem]{Theorem}
\aliascntresetthe{theorem}

\newaliascnt{corollary}{proposition}
\newtheorem{corollary}[corollary]{Corollary}
\aliascntresetthe{corollary}

\newaliascnt{question}{proposition}
\newtheorem{question}[question]{Question}
\aliascntresetthe{question}

\newaliascnt{conjecture}{proposition}
\newtheorem{conjecture}[conjecture]{Conjecture}
\aliascntresetthe{conjecture}

\theoremstyle{definition}

\newaliascnt{remark}{proposition}
\newtheorem{remark}[remark]{Remark}
\aliascntresetthe{remark}

\newaliascnt{claim}{proposition}

\aliascntresetthe{claim}

\newaliascnt{observation}{proposition}

\aliascntresetthe{observation}

\newaliascnt{example}{proposition}

\aliascntresetthe{example}

\newaliascnt{definition}{proposition}
\newtheorem{definition}[definition]{Definition}
\aliascntresetthe{definition}

\crefname{proposition}{proposition}{propositions}
\Crefname{proposition}{Proposition}{Propositions}
\crefname{lemma}{lemma}{lemmas}
\Crefname{lemma}{Lemma}{Lemmas}
\crefname{theorem}{theorem}{theorems}
\Crefname{theorem}{Theorem}{Theorems}
\crefname{corollary}{corollary}{corollaries}
\Crefname{corollary}{Corollary}{Corollaries}
\crefname{question}{question}{questions}
\Crefname{question}{Question}{Questions}
\crefname{conjecture}{conjecture}{conjectures}
\Crefname{conjecture}{Conjecture}{Conjectures}
\crefname{remark}{remark}{remarks}
\Crefname{remark}{Remark}{Remarks}
\crefname{claim}{claim}{claims}
\Crefname{claim}{Claim}{Claims}
\crefname{observation}{observation}{observations}
\Crefname{observation}{Observation}{Observations}
\crefname{example}{example}{examples}
\Crefname{example}{Example}{Examples}
\crefname{definition}{definition}{definitions}
\Crefname{definition}{Definition}{Definitions}
\crefname{question}{question}{questions}
\Crefname{question}{Question}{Questions}

\newcommand{\lk}{{\mathrm{lk}}}
\newcommand{\del}{{\mathrm{del}}}

\usepackage{float}
\usepackage{caption}
\usepackage{subcaption}
\usepackage{comment}
\usetikzlibrary{positioning, shapes, fit, arrows}
\usepackage{tkz-graph}
\tikzstyle{vertex}=[circle, draw, inner sep=0pt, minimum size=6pt]

\def\F{\mathcal{F}}

\def\Z{\mathbb{Z}}

\newcommand{\fdel}{{\rm fdel}}

\newcommand{\PF}{{\rm PF}}

\title[Path-free complexes]{On a complete characterization of  path-free complexes associated with complete multipartite graphs}
\thanks{Last updated: \today}

\author{Priyavrat Deshpande}
\address{Chennai Mathematical Institute, India}
\email{pdeshpande@cmi.ac.in}

\author{Shuchita Goyal}
\address{BITS Pilani, India}
\email{shuchita.goyal@pilani.bits-pilani.ac.in}

\author{Rutuja Sawant}
\address{Chennai Mathematical Institute, India}
\email{rutuja@cmi.ac.in}

\keywords{Cohen-Macaulay simplicial complexes, Stanley-Reisner correspondence, shellable, vertex decomposable complexes}
\subjclass[2010]{13F55, 05E45}
\thanks{The first and the third authors are partially supported by a grant from the Infosys Foundation. The second author is partially supported by NFSG grant N5/25/1045.}

\begin{document}

\begin{abstract}
Let $G$ be a graph and let $\PF_t(G)$ denote the simplicial complex whose faces are
vertex subsets whose induced subgraphs contain no path on $t$ vertices. 
These complexes encode a forbidden-subgraph condition as a family of allowed vertex subsets.

In this paper we study $t$-path-free complexes of complete multipartite graphs.
Let
\[
G=K_{n_1,\dots,n_m}, \qquad n_1\le\cdots\le n_m.
\]
We first obtain an explicit
structural decomposition of $\PF_t(G)$ as a union of join complexes, together with
an additional lower-dimensional size-truncation term. 
Using this decomposition, we show that for $t\le 2n_{m-1}-2$ the complex $\PF_t(G)$ is not sequentially Cohen-Macaulay,
while for $t\ge 2 n_{m-1}-1$ it is vertex decomposable.

Consequently, we obtain a complete characterization for complete multipartite graphs: $\PF_t(G)$ is vertex decomposable if and only if $t\ge 2n_{m-1}-1$. Equivalently, this is also exactly the range in which $\PF_t(G)$ is shellable and sequentially Cohen-Macaulay.
We further analyze the topology via a Mayer–Vietoris spectral sequence: for complete bipartite graphs we determine the full homotopy type as an explicit wedge of spheres in all cases.
\end{abstract}
\maketitle

\section{Introduction}

Simplicial complexes arising from graphs provide a bridge between
combinatorics, topology, and commutative algebra: one encodes a forbidden
subgraph condition as a family of allowed vertex sets. In this paper we study
the case where the forbidden pattern is a path on $t$ vertices. For a graph
$G$ and a positive integer $t$, the \emph{$t$-path-free complex} $\PF_t(G)$
consists of all subsets $F\subseteq V(G)$ such that the induced subgraph
$G[F]$ contains no copy of $P_t$. 
These complexes arise as special cases of devoid complexes \cite{DT} associated to graphs.
Our focus is on complete multipartite graphs $G=K_{n_1,\dots,n_m}$.

Simplicial complexes built from graph-theoretic constraints have a rich literature. 
The most classic examples are independence complexes $\mathrm{Ind}(G)$, whose faces are independent sets of a graph $G$; these have been extensively studied from both topological and algebraic perspectives \cite{RG, Kak}
In the algebraic direction, these complexes are intimately related to edge ideals $I(G)$ via the Stanley–Reisner correspondence: Cohen–Macaulay properties of $\mathrm{Ind}(G)$ translate directly to depth conditions on 
$k[x_1,\dots,x_n]/I(G)$. 
A systematic study of when independence complexes are vertex decomposable, shellable, or sequentially Cohen–Macaulay was carried out by Woodroofe \cite{RW09, RW} and others \cite{FAVT, VanTuylVillarreal2008}. 

A natural generalization replaces the forbidden edge by a forbidden path. 
The $t$-path ideal $I_t(G)$, generated by monomials corresponding to paths of length $t$ in $G$ has also been studied extensively by the commutative algebra community. 
See for example \cite{Bi, ConcaDeNegri1999, AKK, AH, SM}. 
The Stanley–Reisner complex of the Alexander dual of $I_t(G)$ is precisely $\PF_t(G)$. 
Algebraic properties of path ideals for specific graph families have been studied in the papers cited before. 
However, a complete combinatorial characterization for complete multipartite graphs - the most natural dense graph family - has not previously been given.

The problem is to determine exactly for which $t$ these complexes have strong
recursive and algebraic structure (vertex decomposable, shellable,
sequentially Cohen-Macaulay). This matters because these properties govern
both the topology of the complex and its Stanley-Reisner algebraic behavior, though we focus here on the combinatorial and topological aspects.
Our main result gives a sharp threshold in terms of the second-largest part
size $n_{m-1}$: $\PF_t(G)$ has all three properties precisely when
$t\ge 2n_{m-1}-1$, while for $t\le 2n_{m-1}-2$ it is not sequentially
Cohen-Macaulay.

The classes vertex decomposable, shellable, and sequentially
Cohen-Macaulay are central in this area, because they capture strong
combinatorial structure. 
 
Our strategy is to convert the path-avoidance condition into an explicit decomposition of $\PF_t(K_{n_1,\dots,n_m})$ as a union of join complexes (together with a lower-dimensional size-truncation part), and then analyze this decomposition.

This leads to a complete classification theorem.

\begin{theorem}\label{thm: VDC - CBG}
Let $G=K_{n_1,\dots,n_m}$ with $n_1\le\cdots\le n_m$. For $t\ge 2$, the following statements are equivalent:
\begin{enumerate}
\item $t\ge 2n_{m-1}-1$,
\item $\PF_t(G)$ is vertex decomposable,
\item $\PF_t(G)$ is shellable,
\item $\PF_t(G)$ is sequentially Cohen-Macaulay.
\end{enumerate}
\end{theorem}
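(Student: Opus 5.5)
The plan is to first make the structure of $\PF_t(G)$ explicit and then treat the two implications separately. Write $V(G)=V_1\sqcup\dots\sqcup V_m$ with $|V_i|=n_i$. For $F\subseteq V(G)$ meeting at least two parts, $G[F]$ is complete multipartite with parts $F\cap V_i$. Its longest path has $\min\bigl(|F|,\,2(|F|-a)+1\bigr)$ vertices, where $a=\max_i|F\cap V_i|$; if $F$ lies in one part there are no edges. Hence $F\in\PF_t(G)$ if and only if either $|F|\le t-1$, or $F\subseteq V_i\cup B$ with $|B\setminus V_i|\le k:=\lfloor (t-2)/2\rfloor$ for some $i$. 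This yields the decomposition
\[
\PF_t(G)=\bigcup_{i=1}^m \bigl(\langle V_i\rangle * \mathrm{Sk}_{k-1}(V\setminus V_i)\bigr)\ \cup\ \mathrm{Sk}_{t-2}(V),
\]
where $\mathrm{Sk}_j$ denotes the $j$-skeleton of the simplex on the indicated vertex set (so faces of size at most $j+1$) and $\langle V_i\rangle$ is the full simplex on $V_i$. I would use this decomposition throughout.

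\emph{(1)$\Rightarrow$(2).} If $t\ge 2n_{m-1}-1$, then $n_i+k\le n_{m-1}+\lfloor (t-2)/2\rfloor\le t-1$ for every $i\le m-1$. So all join pieces except the one for $V_m$ are absorbed into the truncation term, and
\[
\PF_t(G)=\Gamma(A,B,k,r):=\bigl(\langle A\rangle*\mathrm{Sk}_{k-1}(B)\bigr)\cup \mathrm{Sk}_{r-1}(A\cup B),
\]
with $A=V_m$, $B=V\setminus V_m$, and $r=t-1\ge k$. I would prove by induction on $|B|$ that every complex of this form with $r\ge k$ is vertex decomposable. When $B=\emptyset$ it is a simplex. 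Otherwise, take $b\in B$ as the shedding vertex. Then
\[
\del(b)=\Gamma(A,B-b,k,r),\qquad \lk(b)=\Gamma(A,B-b,k-1,r-1),
\]
both of the same shape with $r\ge k$ preserved; the degenerate cases $k=0$ or $r=0$ give a simplex or a skeleton of a simplex, both vertex decomposable. For the shedding condition, let $\sigma$ be a face of $\lk(b)$. If $\sigma\cup b$ lies in the join piece, then $\sigma$ has at most $k-1$ vertices in $B$, so when $B-b\ne\emptyset$ we can add a vertex of $B-b$ to $\sigma$, or else a vertex of $A$. If $|\sigma|\le r-1$, then $\sigma$ extends inside the skeleton. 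Either way $\sigma$ is not a facet of $\del(b)$; I only need to check the few small cases by hand. The implications (2)$\Rightarrow$(3)$\Rightarrow$(4) are standard.

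\emph{(4)$\Rightarrow$(1).} Suppose $t\le 2n_{m-1}-2$. Then $n_{m-1}+k\ge t$, so both $\langle V_{m-1}\rangle*\mathrm{Sk}_{k-1}(V\setminus V_{m-1})$ and the corresponding piece for $V_m$ contribute facets of size $d+1:=n_{m-1}+k>t-1$ not covered by the truncation term. I would use the Duval criterion: a complex is sequentially Cohen--Macaulay iff every pure skeleton is Cohen--Macaulay. The pure $d$-skeleton $\Delta^{[d]}$ is generated by the size-$(d+1)$ sets of the form $V_i\cup B$ (with $|B\setminus V_i|\le k$) for those $i$ with $n_i=n_{m-1}$ or $i=m$; faces from different parts can be glued only through faces of size $\le t-1$. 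I would then exhibit a face $\tau$ of $\Delta^{[d]}$ with $|\tau|=d-1$ whose link is $1$-dimensional and disconnected, contradicting Reisner's criterion. The natural choice is $\tau=(V_{m-1}\setminus\{x\})\cup(V_m\cap\dots)$ arranged so that it extends to a $V_{m-1}$-type facet only by adding vertices $x,x'$ near $V_{m-1}$, and to a $V_m$-type facet only by adding a disjoint edge in $V_m$; in the link, no vertex of the first edge is adjacent to the second, because any mixed $(d+1)$-set would violate $|F\setminus V_i|\le k$ for every $i$.

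I expect the main obstacle to be this last step: choosing $\tau$ correctly in the boundary cases. These are $k=0$ (where $t\in\{2,3\}$), parity of $t$, and $n_{m-1}=n_m$, where $V_{m-1}$ and $V_m$ play symmetric roles. In the worst case I would instead pass to a link inside $V_{m-1}\cup V_m$ that reduces the situation to $\PF_{t'}(K_{n,n'})$. There I would argue via reduced homology, using the Mayer--Vietoris description of the pieces, that the pure top skeleton has homology in a dimension below its top dimension.
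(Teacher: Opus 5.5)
Your argument for (1)$\Rightarrow$(2) is correct and is the paper's argument. The paper first uses a dimension count to show that only the $V_m$-piece survives. It then proves $(\Delta_W*(\Delta_{W^c})^{[a]})\cup(\Delta_V)^{[b]}$ is vertex decomposable by shedding vertices of $W^c$, which is your induction on $|B|$.

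The gap is in (4)$\Rightarrow$(1): for most $t$ the face you intend to exhibit does not exist. In your $\Delta^{[d]}$ with $d+1=n_{m-1}+k$, every $(d+1)$-face $F$ has a \emph{unique} part $V_i$ with $|F\cap V_i|\ge n_{m-1}$; two such parts would give $|F|\ge 2n_{m-1}>n_{m-1}+k$. Call $i$ the type of $F$.

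Let $|\tau|=d-1$. If the link of $\tau$ has edges of two types $i\ne j$, then $|\tau\cap V_i|,|\tau\cap V_j|\ge n_{m-1}-2$. This gives $n_{m-1}+k-2\ge 2n_{m-1}-4$, forcing $k=n_{m-1}-2$, i.e.\ $t=2n_{m-1}-2$. Otherwise all edges have a single type $i$. According as $\tau$ still needs $2$, $1$ or $0$ vertices of $V_i$, the link is:
\begin{itemize}
\item the complete graph on $V_i\setminus\tau$;
\item the graph in which every vertex of $V_i\setminus\tau$ is adjacent to every other vertex of $V\setminus\tau$;
\item the complete graph on $V\setminus\tau$.
\end{itemize}
Each of these is connected. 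So for every $t\le 2n_{m-1}-3$, no codimension-$2$ face of $\Delta^{[d]}$ has a disconnected link.

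Your concrete candidate never works. A face $\tau\supseteq V_{m-1}\setminus\{x\}$ of size $d-1$ has only $k-1\le n_{m-1}-3$ further vertices. So it has no $V_m$-type extension, and its link is a star centred at $x$. The fallback (reduce to $\PF_{t'}(K_{n,n'})$ and use Mayer--Vietoris) is not an argument as written.

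What is missing is the right face, or the right skeleton.

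Within your $\Delta^{[d]}$, take $\tau=(V_{m-1}\setminus X')\cup B_1$ with $X'\subseteq V_{m-1}$, $|X'|=\ell:=n_{m-1}-k\ge 2$, and $B_1\subseteq V_m$, $|B_1|=k$. For $k\ge 1$, a $(d+1)$-face containing $\tau$ must add exactly $X'$, or exactly $\ell$ vertices of $V_m\setminus B_1$. Any other choice leaves every part with fewer than $n_{m-1}$ vertices. Hence $\lk_{\Delta^{[d]}}(\tau)=\langle X'\rangle\sqcup\mathrm{Sk}_{\ell-1}(V_m\setminus B_1)$, which is disconnected of dimension $\ell-1\ge 1$. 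For $k=0$, use $\tau=\emptyset$: $\Delta^{[d]}$ itself is disconnected.

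The paper avoids the growing link dimension by working in the pure $(t-1)$-skeleton instead. Let $F$ consist of $\lfloor t/2\rfloor-1$ vertices from each of $V_{m-1}$ and $V_m$. A path-free $t$-set containing $F$ has at least $t-\lfloor t/2\rfloor+1$ vertices in a single part. So the remaining $t-|F|\in\{2,3\}$ vertices come all from $V_{m-1}\setminus F$ or all from $V_m\setminus F$. The link of $F$ is then a disjoint union of two nonempty complexes of dimension $1$ or $2$.

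Either choice gives $\widetilde H_0\neq 0$ in a positive-dimensional link. That is the step your proposal is missing.
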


In particular, $t\le 2n_{m-1}-2$ is exactly the non-sequentially Cohen-Macaulay range. 
It is perhaps surprising that the largest part plays no role in the threshold. 
The reason is that a long path can always use vertices of the largest part provided enough vertices remain outside it. 
The obstruction is therefore controlled by the second-largest part, which determines the size of the disconnected link used in the proof of non-sequential Cohen–Macaulayness.

We also use the same structural decomposition to study topology: in the bipartite specialization, we give an explicit homotopy-type description as wedges of spheres. 

The paper is organized as follows.
In \Cref{sec: prep} we recall background on graphs and simplicial complexes.
In \Cref{sec: CMG} we develop the multipartite decomposition and prove the
classification theorem above.
In \Cref{sec:CBG-structure-topology} we specialize to complete bipartite
graphs and obtain more explicit decompositions and homotopy consequences.

\section{Background and Preliminary Results}\label{sec: prep}

In this section, we recall some basic notions and results from graph theory and simplicial complexes that will be used throughout the paper.

\subsection{Graphs}

A \emph{graph} $G=(V,E)$ consists of a finite vertex set $V$ and an edge set $E \subseteq \binom{V}{2}$.
We write $V(G)$ and $E(G)$ for the vertex set and edge set of $G$, respectively. The edge between vertices $u$ and $v$ is denoted by $\{u,v\}$, and $u$ is said to be \emph{adjacent} to $v$ if $\{u,v\} \in E(G)$.  

A graph $H=(V(H),E(H))$ is a \emph{subgraph} of a graph $G$ if $V(H)\subseteq V(G)$ and $E(H)\subseteq E(G)$.  
For a subset $W\subseteq V(G)$, the \emph{induced subgraph} of $G$ on $W$, denoted by $G[W]$, is the graph with vertex set $W$ and edge set
\[
\{\{u,v\}\in E(G) : \{u,v\}\subseteq W\}.
\]

\medskip

A \emph{path} on $n$ vertices, denoted by $P_n$, is the graph with vertex set $\{v_1,\dots,v_n\}$ and edge set
\[
\{\{v_i,v_{i+1}\} \mid 1 \le i \le n-1\}.
\]

We define $\lambda(G)$, the \emph{length} of a longest path of $G$ by
\[
\lambda(G) =
\max \{n-1 \mid \text{$G$ contains a subgraph isomorphic to $P_n$}\}.
\]

\subsection{Simplicial Complexes}

We now recall some standard notions related to simplicial complexes.

\medskip

An \emph{(abstract) simplicial complex} $\Delta$ on a finite set $V$ is a collection of subsets of $V$ such that if $F\in \Delta$, then every subset of $F$ also belongs to $\Delta$. The elements of $\Delta$ are called \emph{faces}, and the maximal faces with respect to inclusion are called \emph{facets}. 

The \emph{dimension} of a face $F$ is defined as $\dim F = |F|-1$, where $|F|$ denotes the cardinality of $F$. The \emph{dimension} of $\Delta$ is the maximum dimension among its faces. A singleton $\{v\}\in \Delta$ is called a \emph{vertex} of $\Delta$.

The \emph{empty complex}, denoted by $\{\emptyset\}$, consists only of the empty set and is regarded as the unique $(-1)$-dimensional simplicial complex, while the \emph{void complex}, denoted by $\emptyset$, has no faces (we follow the convention of \cite{kozlov2007}).

A simplicial complex $\Delta$ is called \emph{pure} if all of its facets have the same dimension; otherwise, it is called \emph{non-pure}.

A simplicial complex consisting of a unique facet of dimension $n$ is called an \emph{$n$-simplex}. If $\Delta$ is a simplex on the vertex set $V$, we denote it by $\Delta_V$. 

\medskip

A \emph{subcomplex} $\Delta'$ of $\Delta$ is a simplicial complex such that every face of $\Delta'$ is also a face of $\Delta$. For $W\subseteq V$, the \emph{induced subcomplex} of $\Delta$ on $W$ consists of all faces of $\Delta$ that are contained in $W$.

For an integer $d\ge 0$, the \emph{$d$-skeleton} of $\Delta$ is the subcomplex consisting of all faces of dimension at most $d$. 
The \emph{pure $d$-skeleton}, denoted by $\Delta^{[d]}$, is the subcomplex whose facets are precisely the $d$-dimensional faces of $\Delta$.
Note that the pure $d$-skeleton differs from the standard $d$-skeleton when the complex has facets of dimension less than $d$. 
\medskip

We now recall several fundamental constructions associated with simplicial complexes.

Let $\Delta_1$ and $\Delta_2$ be simplicial complexes on disjoint vertex sets. Their \emph{join}, denoted $\Delta_1 * \Delta_2$, is the simplicial complex with vertex set $V(\Delta_1)\cup V(\Delta_2)$ and face set
\[
\{F_1 \cup F_2 : F_1 \in \Delta_1,\ F_2 \in \Delta_2\}.
\]

Let $F$ be a face of a simplicial complex $\Delta$. The \emph{link} of $F$ in $\Delta$ is
\[
\lk_\Delta(F)=\{F' \in \Delta : F\cap F'=\emptyset,\ F\cup F' \in \Delta\}.
\]

The \emph{deletion} and \emph{face deletion} of $F$ in $\Delta$ are defined by
\[
\del_\Delta(F)=\{F' \in \Delta : F'\cap F=\emptyset\},
\quad
\fdel_\Delta(F)=\{F' \in \Delta : F \not\subseteq F'\}.
\]

If $F=\{v\}$ is a vertex, we simply write $\lk_\Delta(v)$, $\del_\Delta(v)$, and $\fdel_\Delta(v)$.
In this paper, we use only the deletion and the link for vertices.

\medskip

We now recall several important combinatorial and topological properties of simplicial complexes.

\begin{definition}\label{def:VD}
A simplicial complex $\Delta$ is \emph{vertex decomposable} if either $\Delta$ is a simplex (including $\emptyset$ and $\{\emptyset\}$), or there exists a vertex $v$ such that both $\lk_{\Delta}(v)$ and $\del_{\Delta}(v)$ are vertex decomposable, and every facet of $\del_{\Delta}(v)$ is also a facet of $\Delta$. Such a vertex $v$ is called a \emph{shedding vertex}.
\end{definition}

\medskip

Next we recall the notions of shellable and sequentially Cohen-Macaulay simplicial complexes.
We use the recursive definition of shellability, which is equivalent to the classical facet-ordering definition (see \cite[Chapter 3]{JJ08}).

\begin{definition}\label{def:shellable}
A simplicial complex $\Delta$ is \emph{shellable} if either $\Delta$ is a simplex (including $\emptyset$ and $\{\emptyset\}$), or there exists a face $F$ such that both $\lk_{\Delta}(F)$ and $\fdel_{\Delta}(F)$ are shellable, and every facet of $\fdel_{\Delta}(F)$ is also a facet of $\Delta$. Such a face $F$ is called a \emph{shedding face}.
\end{definition}

It is well known (see \cite[Theorem 3.35]{JJ08} and \cite[Theorem 12.3]{kozlov2007}) that if $\Delta$ is a shellable simplicial complex, then $\Delta$ is homotopy equivalent to a wedge of spheres. Moreover, a sphere of dimension $d$ appears in this wedge only if $\Delta$ has facets of dimension $d$.

\begin{definition}\label{def:CM}
A simplicial complex $\Delta$ is \emph{Cohen-Macaulay} over a field $\mathbb{K}$ if
\[
\widetilde{H}_i(\lk_{\Delta}(\sigma);\mathbb{K}) = 0
\]
for all $\sigma \in \Delta$ and all $i < \dim \lk_{\Delta}(\sigma)$.
\end{definition}

\begin{definition}\label{def:SCM}
A simplicial complex $\Delta$ is \emph{sequentially Cohen-Macaulay} over a field $\mathbb{K}$ if its pure $d$-skeleton $\Delta^{[d]}$ is Cohen-Macaulay over $\mathbb{K}$ for every $d\ge0$.
\end{definition}

For simplicial complexes, the following implications are well known (see \cite[Theorem 3.33]{JJ08}):
\begin{equation}\label{eq: implications}
\text{Vertex decomposable}
\Rightarrow
\text{Shellable}
\Rightarrow
\text{Sequentially Cohen-Macaulay}.
\end{equation}

We now state several known results that will be used throughout the paper.

\begin{lemma}[{\cite[Equation 9.12]{TMAB}}]\label{lem: homology for join operation}
    Let $\Delta_1$ and $\Delta_2$ be simplicial complexes with disjoint vertex sets. Assume the homology of one of $\Delta_1$ and $\Delta_2$ is always free. Then we have the following isomorphism in homology:
    \[
\widetilde H_n(\Delta_1 * \Delta_2)
\cong
\bigoplus_{i+j=n-1}
\widetilde H_i(\Delta_1)
\otimes
\widetilde H_j(\Delta_2).
\]
\end{lemma}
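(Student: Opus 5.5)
The plan is to prove this at the chain level, using augmented simplicial chains with coefficients in $\Z$. Write $\widetilde C_\bullet(\Delta)$ for the augmented chain complex of $\Delta$: its group in degree $i$ is free on the $i$-dimensional faces, and the empty face sits in degree $-1$. The key observation is that every face of $\Delta_1*\Delta_2$ can be written uniquely as $F_1\cup F_2$ with $F_k\in\Delta_k$, because the vertex sets are disjoint. Moreover $\dim(F_1\cup F_2)=\dim F_1+\dim F_2+1$. This gives an isomorphism of graded free abelian groups
\[
\widetilde C_{n}(\Delta_1*\Delta_2)\;\cong\;\bigoplus_{i+j=n-1}\widetilde C_i(\Delta_1)\otimes \widetilde C_j(\Delta_2),
\qquad F_1\cup F_2\mapsto F_1\otimes F_2 .
\]
The correspondence $F_1\cup F_2\leftrightarrow(F_1,F_2)$ includes the cases $F_1=\emptyset$ or $F_2=\emptyset$.

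First, I fix a total order on $V(\Delta_1)\cup V(\Delta_2)$ in which every vertex of $\Delta_1$ precedes every vertex of $\Delta_2$. This orients all simplices compatibly. I then check that the boundary of the join corresponds to the tensor-product differential
\[
\partial(a\otimes b)=\partial a\otimes b+(-1)^{|a|+1}a\otimes\partial b ,
\]
where the sign is governed by the degree shifted by one, i.e. by the number of vertices of $F_1$. This holds because deleting the $k$-th vertex of $F_1\cup F_2$ contributes sign $(-1)^{k}$, and the vertices of $F_2$ come after the $|F_1|$ vertices of $F_1$. So, after the degree shift $\widetilde C_\bullet[1]$, the augmented chain complex of the join is the tensor product of the shifted augmented complexes of $\Delta_1$ and $\Delta_2$. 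The augmentation terms, involving the empty face, are handled uniformly by this bookkeeping: the empty face is the unit for $\otimes$.

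Second, I apply the algebraic Künneth theorem. This is valid because the augmented chain groups are free abelian. It gives a natural short exact sequence
\[
0\to\bigoplus_{i+j=n-1}\widetilde H_i(\Delta_1)\otimes\widetilde H_j(\Delta_2)\to\widetilde H_n(\Delta_1*\Delta_2)\to\bigoplus_{i+j=n-2}\mathrm{Tor}\big(\widetilde H_i(\Delta_1),\widetilde H_j(\Delta_2)\big)\to0 .
\]
By hypothesis all homology groups of one factor are free. Hence every $\mathrm{Tor}$ term vanishes, and the left map is the desired isomorphism.

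Finally, I treat the degenerate cases separately. If either complex is the void complex $\emptyset$, the join is void and both sides are zero. If $\Delta_1=\{\emptyset\}$, then $\widetilde H_{-1}(\Delta_1)=\Z$ and all other groups vanish, so the formula reduces to $\widetilde H_n(\Delta_2)\cong\widetilde H_{n}(\Delta_2)$, consistent with $\{\emptyset\}*\Delta_2=\Delta_2$.

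I expect the only genuinely delicate step to be the sign verification that the join boundary matches the tensor differential under the chosen orientation. If that becomes messy, the fallback is the topological route: use $\|\Delta_1*\Delta_2\|\simeq\Sigma(\|\Delta_1\|\wedge\|\Delta_2\|)$ together with the reduced Künneth theorem for smash products. The chain-level argument is more self-contained, however.
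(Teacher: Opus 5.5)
The paper does not prove this lemma; it quotes it from Bj\"orner's Handbook chapter. There, formula (9.12) is the full K\"unneth formula for joins, including the $\mathrm{Tor}$ terms. The stated lemma is its specialization when one factor has free homology.

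Your proposal supplies the standard chain-level derivation behind that formula, and it is correct. With the vertices of $\Delta_1$ ordered first, deleting the $l$-th vertex of $F_2$ from $F_1\cup F_2$ carries the sign $(-1)^{|F_1|+l}$. Hence $\partial(F_1\cup F_2)=(\partial F_1)\cup F_2+(-1)^{\dim F_1+1}F_1\cup\partial F_2$. This is the Koszul differential on the tensor product of the augmented complexes with degrees raised by one, and it includes the augmentation terms correctly. For example, for vertices $v,w$ it gives $\partial[v,w]=[w]-[v]$, as it should. The index shift in the K\"unneth sequence then produces exactly the ranges you wrote:
\[
0\to\bigoplus_{i+j=n-1}\widetilde H_i(\Delta_1)\otimes\widetilde H_j(\Delta_2)\to\widetilde H_n(\Delta_1*\Delta_2)\to\bigoplus_{i+j=n-2}\mathrm{Tor}\bigl(\widetilde H_i(\Delta_1),\widetilde H_j(\Delta_2)\bigr)\to 0 .
\]
Freeness of one factor's homology then kills every $\mathrm{Tor}$ term.

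Compared with the bare citation, your route is self-contained. It makes explicit that the freeness hypothesis is used only to kill $\mathrm{Tor}$. It also confirms the formula under the paper's conventions for $\{\emptyset\}$ and the void complex.

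Your separate treatment of the degenerate cases is unnecessary. The augmented complex of $\{\emptyset\}$ is $\mathbb{Z}$ in degree $-1$, which after shifting is the unit for the tensor product, and the augmented complex of the void complex is $0$. So the chain isomorphism already covers both cases.

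By contrast, your topological fallback via $\Sigma(\|\Delta_1\|\wedge\|\Delta_2\|)$ would genuinely require both complexes to have a vertex, since the smash product needs basepoints. So the chain-level argument is the right choice.
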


\begin{lemma}[{\cite[Theorem 3.30]{JJ08}}]\label{join property}
Let $\Delta_1$ and $\Delta_2$ be simplicial complexes with disjoint vertex sets. Then $\Delta_1 * \Delta_2$ is vertex decomposable if and only if both $\Delta_1$ and $\Delta_2$ are vertex decomposable.
\end{lemma}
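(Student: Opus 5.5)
We prove both directions by induction on the total number of vertices $|V(\Delta_1)|+|V(\Delta_2)|$. The argument rests on two identities for a vertex $v$ of $\Delta_1$:
\[
\lk_{\Delta_1*\Delta_2}(v)=\lk_{\Delta_1}(v)*\Delta_2,
\qquad
\del_{\Delta_1*\Delta_2}(v)=\del_{\Delta_1}(v)*\Delta_2 .
\]
Both follow directly from the definition of the join, since the vertex sets are disjoint and $v$ lies only in $\Delta_1$. We also use the following description of facets: the facets of $\Delta_1*\Delta_2$ are exactly the sets $F\cup G$ with $F$ a facet of $\Delta_1$ and $G$ a facet of $\Delta_2$. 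Throughout, we assume the complexes are non-void, so $\emptyset$ is a face of each. If one factor is void the join is void and the statement is degenerate under the conventions of \cite{kozlov2007}, so we set that case aside.

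For the direction ($\Leftarrow$), suppose $\Delta_1$ and $\Delta_2$ are vertex decomposable.

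1. If both are simplices, then $\Delta_1*\Delta_2$ is the simplex on $V(\Delta_1)\cup V(\Delta_2)$.
2. Otherwise, by symmetry, $\Delta_1$ is not a simplex. Let $v$ be a shedding vertex of $\Delta_1$.
3. By the identities above and the induction hypothesis, $\lk_{\Delta_1*\Delta_2}(v)$ and $\del_{\Delta_1*\Delta_2}(v)$ are vertex decomposable. The induction applies because $v$ is removed from the vertex set.
4. For the facet condition, a facet of $\del_{\Delta_1}(v)*\Delta_2$ has the form $F\cup G$, with $F$ a facet of $\del_{\Delta_1}(v)$ and $G$ a facet of $\Delta_2$. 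Since $v$ is a shedding vertex, $F$ is a facet of $\Delta_1$. Hence $F\cup G$ is a facet of $\Delta_1*\Delta_2$.

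So $v$ is a shedding vertex of the join.

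For the direction ($\Rightarrow$), suppose $\Delta_1*\Delta_2$ is vertex decomposable.

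1. If the join is a simplex, then every subset of its vertex set is a face. Restricting to $V(\Delta_i)$ shows that each $\Delta_i$ is a simplex; this includes the case $\Delta_i=\{\emptyset\}$.
2. Otherwise, let $v$ be a shedding vertex of the join; by symmetry, $v\in V(\Delta_1)$.
3. By the identities, $\lk_{\Delta_1}(v)*\Delta_2$ and $\del_{\Delta_1}(v)*\Delta_2$ are vertex decomposable. Both have fewer vertices, so induction shows that $\lk_{\Delta_1}(v)$, $\del_{\Delta_1}(v)$ and $\Delta_2$ are vertex decomposable.
4. It remains to check that $v$ is a shedding vertex of $\Delta_1$. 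Take a facet $F$ of $\del_{\Delta_1}(v)$ and any facet $G$ of $\Delta_2$. Then $F\cup G$ is a facet of $\del_{\Delta_1*\Delta_2}(v)$, hence a facet of $\Delta_1*\Delta_2$. By the facet description, $F$ is a facet of $\Delta_1$.
5. Therefore $\Delta_1$ is vertex decomposable, with $v$ as a shedding vertex.

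The main obstacle is bookkeeping rather than ideas: verifying the link and deletion identities and the facet description carefully, including the degenerate factor $\{\emptyset\}$ (whose only facet is $\emptyset$). One must also make sure that the induction measure strictly decreases when a shedding vertex is removed.
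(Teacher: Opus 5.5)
Your proof is correct. The paper gives no proof of this lemma and simply cites \cite[Theorem 3.30]{JJ08}; your argument is the standard self-contained proof behind that citation. It is an induction on the number of vertices, using $\lk_{\Delta_1*\Delta_2}(v)=\lk_{\Delta_1}(v)*\Delta_2$, $\del_{\Delta_1*\Delta_2}(v)=\del_{\Delta_1}(v)*\Delta_2$, and the fact that the facets of a join are exactly the unions of facets of the factors. That facet description is what carries the shedding condition across the join in both directions.

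One point should be stated more firmly than ``degenerate''. Under the paper's convention that the void complex counts as a simplex (hence is vertex decomposable), the forward direction is actually false when a factor is void. Indeed $\emptyset*\Delta_2=\emptyset$ is vertex decomposable for every $\Delta_2$, for example when $\Delta_2$ consists of two disjoint edges, which is not even Cohen--Macaulay. So your non-void hypothesis is a necessary assumption that the paper's statement leaves implicit. Your proof uses it exactly where it is needed: for the facet description of the join, and for the existence of a facet $G$ of $\Delta_2$ in step 4 of the forward direction.
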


The following proposition records a basic hereditary property of vertex decomposable complexes (see \cite{JJ08} for details).

\begin{proposition}\label{prop:V.D. of lk}
The link of every face of a vertex decomposable simplicial complex is vertex decomposable.
\end{proposition}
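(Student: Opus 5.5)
The statement to prove is \Cref{prop:V.D. of lk}: links of faces of vertex decomposable complexes are vertex decomposable. I would induct on the number of vertices of $\Delta$, and it suffices to treat the link of a single vertex $w$. Indeed, for a face $\sigma=\{w_1,\dots,w_k\}$ one has
\[
\lk_\Delta(\sigma)=\lk_{\lk_\Delta(w_1)}(\{w_2,\dots,w_k\}),
\]
so iterating the vertex case gives the general statement. The empty face is trivial, since $\lk_\Delta(\emptyset)=\Delta$.

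If $\Delta$ is a simplex, every link of a vertex is again a simplex (possibly $\{\emptyset\}$), and we are done. Otherwise fix a shedding vertex $v$. If $w=v$, then $\lk_\Delta(v)$ is vertex decomposable by definition. If $w\neq v$ and $v\notin\lk_\Delta(w)$, then $\lk_\Delta(w)=\lk_{\del_\Delta(v)}(w)$. This is the link of a vertex in the vertex decomposable complex $\del_\Delta(v)$, which has fewer vertices, so the induction hypothesis applies.

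The main case is $w\neq v$ with $v\in\lk_\Delta(w)$. Put $L=\lk_\Delta(w)$. Then $\lk_L(v)=\lk_{\lk_\Delta(v)}(w)$ and $\del_L(v)=\lk_{\del_\Delta(v)}(w)$. Both are links of a vertex in complexes on fewer vertices that are vertex decomposable, namely $\lk_\Delta(v)$ and $\del_\Delta(v)$. By induction both are vertex decomposable.

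It remains to check that $v$ is a shedding vertex of $L$, i.e.\ that every facet of $\del_L(v)$ is a facet of $L$. Let $G$ be a facet of $\del_L(v)$. Since $G\cup\{w\}\in\del_\Delta(v)$, choose a facet $H$ of $\del_\Delta(v)$ containing $G\cup\{w\}$. Then $H\setminus\{w\}\in\del_L(v)$ contains $G$, so maximality gives $H=G\cup\{w\}$. By the shedding hypothesis $H$ is a facet of $\Delta$, hence $H\cup\{v\}\notin\Delta$, i.e.\ $G\cup\{v\}\notin L$. Any face of $L$ strictly containing $G$ either avoids $v$, contradicting maximality in $\del_L(v)$, or contains $G\cup\{v\}$, which is impossible. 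So $G$ is a facet of $L$.

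This facet-lifting argument is the only real obstacle; everything else is routine bookkeeping of the identities between links and deletions. The induction is on the number of vertices, and $\lk_\Delta(v)$ and $\del_\Delta(v)$ both omit $v$, so the hypothesis legitimately applies to them.
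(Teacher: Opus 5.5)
Your proof is correct. The paper gives no argument of its own here and simply cites Jonsson's book. Your induction is the standard proof behind that reference, so it serves as a complete, self-contained justification: you reduce to vertex links, and when $\{v,w\}\in\Delta$ you show that the shedding vertex $v$ of $\Delta$ is a shedding vertex of $\lk_\Delta(w)$, using $\lk_{\lk_\Delta(w)}(v)=\lk_{\lk_\Delta(v)}(w)$ and $\del_{\lk_\Delta(w)}(v)=\lk_{\del_\Delta(v)}(w)$ together with your facet-lifting check.
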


We also recall a classical result concerning skeleta of simplices.

A classical result states that if $\Delta$ is an $n$-simplex, then its pure
$d$-skeleton is vertex decomposable for each $0\le d\le n$
(see \cite[Lemma 3.10]{RW09}).

\medskip

We now introduce the simplicial complexes that will be the main objects of study in this paper.

\begin{definition}
The \textbf{$t$-path-free complex} $\PF_t(G)$ of a graph $G$ is the simplicial complex on the vertex set $V(G)$ whose faces are subsets $F \subseteq V(G)$ such that the induced subgraph $G[F]$ contains no subgraph isomorphic to a path on $t$ vertices.
\end{definition}

\medskip

The following observation describes the behaviour of the $t$-path-free complex under disjoint unions of graphs.

\begin{proposition}\label{prop:disjoint graphs}
Let $G_1$ and $G_2$ be graphs on disjoint vertex sets. Then
\[
\PF_t(G_1 \sqcup G_2)
=
\PF_t(G_1) * \PF_t(G_2).
\]
Consequently, if both $\PF_t(G_1)$ and $\PF_t(G_2)$ are vertex decomposable, then so is $\PF_t(G_1 \sqcup G_2)$.
\end{proposition}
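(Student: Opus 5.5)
The plan is to prove the equality of complexes by comparing faces directly, and then to obtain the vertex decomposability statement from \Cref{join property}. Both complexes live on the vertex set $V(G_1)\sqcup V(G_2)$. Every subset $F$ of this set splits uniquely as $F=F_1\cup F_2$ with $F_i=F\cap V(G_i)$. Since there are no edges between $V(G_1)$ and $V(G_2)$, we get
\[
(G_1\sqcup G_2)[F]=G_1[F_1]\sqcup G_2[F_2].
\]

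The key step is the observation that a path is connected. So any subgraph of $G_1[F_1]\sqcup G_2[F_2]$ isomorphic to $P_t$ lies entirely in one component of the disjoint union, that is, in $G_1[F_1]$ or in $G_2[F_2]$. Hence $G[F]$ contains a copy of $P_t$ if and only if $G_1[F_1]$ or $G_2[F_2]$ does. Equivalently, $F\in\PF_t(G_1\sqcup G_2)$ if and only if $F_1\in\PF_t(G_1)$ and $F_2\in\PF_t(G_2)$. This is exactly the definition of a face of the join $\PF_t(G_1)*\PF_t(G_2)$, which gives the stated equality.

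For the consequence, we apply \Cref{join property}, the ``if'' direction, to the two complexes $\PF_t(G_1)$ and $\PF_t(G_2)$. These are on disjoint vertex sets, so the join is vertex decomposable whenever both factors are.

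There is no real obstacle here. The only care needed is with degenerate conventions, for instance when some $V(G_i)$ is empty and $\PF_t(G_i)=\{\emptyset\}$, or when $t=1$ and every nonempty set contains $P_1$. In these cases the face-by-face argument still holds verbatim, and joins with $\{\emptyset\}$ behave as the identity.
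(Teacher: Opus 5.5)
Your proof is correct and is exactly the argument the paper intends. The paper states this result as an observation without writing out a proof. The equality holds because a copy of $P_t$ is connected and therefore lies entirely in $G_1[F\cap V(G_1)]$ or in $G_2[F\cap V(G_2)]$, and the vertex decomposability consequence is the ``if'' direction of Lemma~\ref{join property}.
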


The next proposition shows that when the parameter $t$ exceeds the maximum possible path length in $G$, the $t$-path-free complex has the simplest possible structure.

\begin{proposition}\label{path free for max path}
Let $G$ be a graph such that $t > \lambda(G) + 1$. Then $\PF_t(G)$ is a $(|V(G)|-1)$-simplex.
\end{proposition}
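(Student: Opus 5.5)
The statement to prove is \Cref{path free for max path}: if $t > \lambda(G)+1$, then $\PF_t(G)$ is the full simplex on $V(G)$. The plan is to show directly that the whole vertex set $V(G)$ is a face. Once that is done, closure under subsets forces $\PF_t(G)$ to be the simplex $\Delta_{V(G)}$, which has dimension $|V(G)|-1$.

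By definition, $V(G)\in \PF_t(G)$ exactly when the induced subgraph $G[V(G)]=G$ contains no subgraph isomorphic to $P_t$. Suppose, for contradiction, that $G$ contains a copy of $P_t$. By the definition of $\lambda(G)$ as the maximum of $n-1$ over all $n$ with $P_n\subseteq G$, we get $t-1\le \lambda(G)$, that is, $t\le \lambda(G)+1$. This contradicts the hypothesis $t>\lambda(G)+1$, so $G$ is $P_t$-free and $V(G)$ is a face.

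It then follows that every $F\subseteq V(G)$ is a face. This can be seen either because simplicial complexes are closed under subsets, or directly, since $G[F]$ is a subgraph of $G$ and so also contains no $P_t$. Hence $\PF_t(G)=\Delta_{V(G)}$, and its unique facet $V(G)$ has dimension $|V(G)|-1$.

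The only point needing care is the edge case $V(G)=\emptyset$. There $\lambda$ is a maximum over an empty set, which one may take to be $-\infty$ or simply exclude, and the complex is $\{\emptyset\}$, the $(-1)$-simplex. This is consistent with the statement. I expect no genuine obstacle: the argument is a one-line unwinding of the definitions of $\lambda(G)$ and $\PF_t(G)$.
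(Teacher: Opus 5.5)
Your argument is correct and complete: any copy of $P_t$ in $G$ would force $t-1\le\lambda(G)$, so $V(G)$ itself is a face and hence $\PF_t(G)=\Delta_{V(G)}$. The paper states this proposition without proof, treating it as an immediate unwinding of the definitions of $\lambda(G)$ and $\PF_t(G)$, which is exactly what you do.
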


Finally, we record a useful description of the deletion operation for $t$-path-free complexes, which will play an important role in our inductive arguments.

\begin{lemma}\label{lem:deletion}
Let $v \in V(G)$. Then
\[
\del_{\PF_t(G)}(v) = \PF_t(G \setminus v).
\]
\end{lemma}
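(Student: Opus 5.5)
The statement is that $\del_{\PF_t(G)}(v)=\PF_t(G\setminus v)$. I will prove it by comparing faces directly, showing the two complexes have the same faces. Both are simplicial complexes whose faces are subsets of $V(G)\setminus\{v\}$, so it suffices to show that a set $F$ with $v\notin F$ is a face of one exactly when it is a face of the other.

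The key observation is that, for any $F\subseteq V(G)\setminus\{v\}$, the induced subgraphs agree:
\[
(G\setminus v)[F]=G[F].
\]
Indeed, both have vertex set $F$. The edges of $G\setminus v$ are exactly the edges of $G$ not incident to $v$, and no edge with both endpoints in $F$ is incident to $v$.

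Now I unwind the definitions. By definition of deletion, $F\in\del_{\PF_t(G)}(v)$ if and only if $F\in\PF_t(G)$ and $v\notin F$. This holds if and only if $v\notin F$ and $G[F]$ contains no subgraph isomorphic to $P_t$. By the displayed equality, that is equivalent to $(G\setminus v)[F]$ containing no copy of $P_t$. Since $V(G\setminus v)=V(G)\setminus\{v\}$, this is exactly the condition $F\in\PF_t(G\setminus v)$. The converse direction is the same chain of equivalences read backwards, which gives equality of the two face sets.

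There is no real obstacle here. The only point needing care is that $\PF_t(G\setminus v)$ lives on the vertex set $V(G)\setminus\{v\}$, so its faces automatically avoid $v$. This matches the condition $F\cap\{v\}=\emptyset$ in the definition of deletion, and it also covers the degenerate cases of the empty face and of complexes reduced to $\{\emptyset\}$.
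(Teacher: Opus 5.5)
Your proof is correct and follows essentially the same route as the paper: both arguments note that for $F\subseteq V(G)\setminus\{v\}$ the induced subgraphs $G[F]$ and $(G\setminus v)[F]$ coincide, and then unwind the definitions of deletion and of $\PF_t$ in both directions. Your remark that faces of $\PF_t(G\setminus v)$ automatically avoid $v$ makes explicit a point the paper uses only implicitly.
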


\begin{proof}
Let $\sigma \in \del_{\PF_t(G)}(v)$. Then $v \notin \sigma$ and the induced subgraph $G[\sigma]$ does not contain a subgraph isomorphic to a path on $t$ vertices. Since $\sigma$ does not contain $v$, we have
\[
G[\sigma] \cong (G \setminus v)[\sigma].
\]
Thus $(G \setminus v)[\sigma]$ also contains no subgraph isomorphic to a path on $t$ vertices, and hence $\sigma \in \PF_t(G \setminus v)$.

Conversely, let $\sigma \in \PF_t(G \setminus v)$. Then $(G \setminus v)[\sigma]$ contains no subgraph isomorphic to a path on $t$ vertices. Since $v \notin \sigma$, we again have
\[
G[\sigma] \cong (G \setminus v)[\sigma],
\]
which implies that $G[\sigma]$ also contains no such path. Therefore $\sigma \in \del_{\PF_t(G)}(v)$.

Hence $\del_{\PF_t(G)}(v) = \PF_t(G \setminus v)$.
\end{proof}

\section{Structural Decomposition and Consequences }\label{sec: CMG}

In this section, we convert the path-avoidance conditions into explicit join decompositions of $\PF_t(K_{n_1,n_2,\ldots,n_m})$, which we use to deduce vertex decomposability for $t \ge 2n_{m-1}-1$ and non-sequentially Cohen-Macaulay for $t \le 2n_{m-1}-2$. We then apply the same structural description to analyze the topology of these complexes.

\begin{definition}[Complete Multipartite Graph]\label{def: CMG}

Let $m \ge 2$ be an integer and let 
\[
n_1 \le n_2 \le \cdots \le n_m
\]
be positive integers. The \emph{complete multipartite graph}, denoted by 
\[
K_{n_1, n_2, \dots, n_m},
\]
is the graph $G=(V,E)$ defined as follows:

\begin{itemize}
    \item The vertex set $V$ is partitioned into $m$ pairwise disjoint subsets
    \[
    V = V_1 \sqcup V_2 \sqcup \cdots \sqcup V_m,
    \]
    where for each $i=1,\dots,m$,
    \[
    V_i = \{v_1^i, v_2^i, \dots, v_{n_i}^i\}.
    \]

    \item The edge set is
    \[
    E = \big\{ \{v_a^i, v_b^j\} \;\big|\; i \ne j,\; 1 \le a \le n_i,\; 1 \le b \le n_j \big\}.
    \]
\end{itemize}

Thus, every vertex in $V_i$ is adjacent to every vertex in $V_j$ for $i \neq j$, and there are no edges between vertices within the same part $V_i$.
\end{definition}

For convenience, throughout this section, we use the shorthand
\[
S:=\sum_{i=1}^m n_i,
\qquad
R:=S-n_m=\sum_{i=1}^{m-1} n_i.
\]
Thus, $S$ is the total number of vertices, and $R$ is the total size of all parts except the largest one. Given a positive integer $t$, we set $k = \left\lfloor \frac{t}{2} \right\rfloor$. 

\subsection{Longest Paths in Complete Multipartite Graphs}

We first characterise the length of the longest path of complete multipartite graphs, which depends on the difference between the total number of vertices and the size of the largest part.

\begin{lemma}\label{lem: number of vertices in a longest path}
Let $G = K_{n_1,\dots,n_m}$ be a complete multipartite graph defined as above. 
Then the number of vertices in a longest path in $G$ is
\begin{equation}\label{max-path-length in CMG}
\lambda(G) + 1 =
\begin{cases}
S, & \text{if } n_m \le R + 1,\\
2R + 1, & \text{if } n_m > R + 1.
\end{cases}
\end{equation}
\end{lemma}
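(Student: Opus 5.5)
The plan is to prove matching upper and lower bounds in each of the two cases. The key structural fact is that $V_m$ is an independent set. So in any path $u_1,\dots,u_p$ in $G$, no two consecutive vertices lie in $V_m$. Every vertex is in some part, but only $V_m$ will matter for the upper bound.

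\textbf{Upper bound.} Trivially $\lambda(G)+1\le S$. For the second bound, let $P$ be a path on $p$ vertices, and let $a$ and $b$ be the numbers of vertices of $P$ inside and outside $V_m$. Since no two vertices of $V_m$ are consecutive along $P$, consecutive vertices of $V_m$ on $P$ must be separated by at least one vertex outside $V_m$, so $a\le b+1$. Combined with $b\le R$, this gives $p=a+b\le 2b+1\le 2R+1$. Hence $\lambda(G)+1\le \min(S,2R+1)$. This minimum equals $S$ exactly when $n_m\le R+1$, and equals $2R+1$ otherwise.

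\textbf{Construction.} I would build paths attaining the bound.

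When $n_m>R+1$, list the $R$ vertices of $V_1\cup\dots\cup V_{m-1}$ as $w_1,\dots,w_R$ and choose $R+1$ distinct vertices $x_0,\dots,x_R\in V_m$. The path $x_0,w_1,x_1,w_2,\dots,w_R,x_R$ alternates between $V_m$ and its complement, so every consecutive pair lies in different parts and is an edge. This gives $2R+1$ vertices.

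When $n_m\le R+1$, I need a Hamiltonian path, that is, an ordering of all $S$ vertices in which consecutive vertices lie in different parts. This is the classical problem of arranging a multiset of colours with no two equal adjacent entries. Such an arrangement exists exactly when the maximum multiplicity is at most $\lceil S/2\rceil$, and $n_m\le R+1$ is equivalent to $n_m\le \lceil S/2\rceil$.

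I would prove this by induction on $S$ with the greedy rule: at each step, place a vertex from a currently largest remaining part, choosing one different from the previous part, with ties broken appropriately.

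A cleaner alternative avoids the greedy argument. Order the vertices part by part, $V_m$ first and then $V_{m-1},\dots,V_1$, as a sequence $y_1,\dots,y_S$. Place $y_1,\dots,y_{\lceil S/2\rceil}$ into the odd positions $1,3,5,\dots$ and the rest into the even positions. Two adjacent positions hold $y_i$ and $y_{i+\lceil S/2\rceil}$ (or $y_{i+1}$ and $y_{i+\lceil S/2\rceil}$). These indices differ by about $\lceil S/2\rceil$, and each part other than $V_m$ has size at most $S/2$. So if such a pair shared a part, that part would contain a consecutive run of length exceeding its size. The part $V_m$ occupies only initial indices $\le n_m\le\lceil S/2\rceil$, so it lies entirely in odd positions and causes no clash.

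\textbf{Main obstacle.} The delicate step is this Hamiltonian path construction in the case $n_m\le R+1$. I expect most of the care to go into the bookkeeping of the off-by-one in the interleaving argument. For parts $V_i$ with $i<m$, the point is that $n_i\le n_m$ and $n_i+n_m\le S$ give $n_i\le S/2$, which is exactly what keeps each such part from straddling two adjacent slots.
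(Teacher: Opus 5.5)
Your proposal is correct and follows the paper's proof. You use the same separation bound $|P|\le\min(S,2R+1)$; applying it to $V_m$ directly avoids the paper's implicit step $\sum_{i\ne j}a_i\le R$. You also use the same alternating path when $n_m>R+1$, and the same reduction to the classical no-two-adjacent rearrangement when $n_m\le R+1$, which the paper only cites and your part-by-part interleaving actually proves.

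The one off-by-one to handle is the adjacent pair with index gap $\lceil S/2\rceil-1$. Its run has length $\lceil S/2\rceil$, so it need not exceed $n_i$ when $S$ is even and $n_i=S/2$. That case forces $m=2$ and $n_1=n_2$, and there $V_1$ fills exactly the even positions, so your construction still works.
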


\begin{proof}
    Since there are no edges inside a part, any path in $G$ cannot contain two consecutive vertices from the same part. Thus, along any path, vertices from a given part must be separated by vertices from other parts.

\medskip

\noindent\textbf{Upper bounds}: Let $P$ be any path in $G$, and let $a_i$ denote the number of vertices of $P$ that lie in $V_i$. Let $j$ be such that $a_j = \max_i a_i$.

Each vertex of $P \cap V_j$, except possibly one, must be separated by a vertex from $P \setminus V_j$. Hence
\[
a_j \le \sum_{i \ne j} a_i + 1.
\]
Summing over all parts gives
\[
|P| = \sum_{i=1}^m a_i \le 2 \sum_{i \ne j} a_i + 1 \le 2R + 1.
\]
Thus, every path has at most $2R+1$ vertices.

Also, trivially, $|P| \le S$. Therefore,
\[
|P| \le \min(S, 2R+1).
\]

\medskip

\noindent\textbf{Case 1: $n_m \le R+1$.}

We show that there exists a path using all $S$ vertices.

Since $n_m \le R+1$, the largest part $V_m$ can be interleaved with the vertices from the remaining parts. More precisely, list the vertices of $V_m$ as
\[
v_1,\dots,v_{n_m},
\]
and list all vertices of $\bigcup_{i<m} V_i$ arbitrarily as
\[
w_1,\dots,w_R.
\]

Since $n_m \le R+1$, we first arrange them in the alternating sequence
\[
v_1, w_1, v_2, w_2, \dots, v_{n_m}, w_{n_m}.
\]

Now, to complete the path, repeatedly pick a vertex from the largest remaining part that wasn't used last. 
The above argument is equivalent to saying that there exists an ordering of all vertices such that no two consecutive vertices lie in the same part.

This follows from the standard necessary and sufficient condition for rearranging a multiset so that no two identical elements are adjacent.

Hence $\lambda(G) + 1 = S$.
\medskip

\noindent\textbf{Case 2: $n_m > R+1$.}

We show that the bound $2R+1$ is sharp.

Choose all vertices from $\bigcup_{i<m} V_i$ and exactly $R+1$ vertices from $V_m$. Arrange them alternately as
\[
v_1, w_1, v_2, w_2, \dots, v_R, w_R, v_{R+1},
\]
where $v_i \in V_m$ and $w_i \in \bigcup_{i<m} V_i$.

This gives a path with $2R+1$ vertices.

Since we have already shown that no path can have more than $2R+1$ vertices, it follows that
\[
\lambda(G) + 1 = 2R+1.
\]
Combining the two cases yields the result.
\end{proof}

\begin{lemma}\label{lem: faces for CMG}
Let $G=K_{n_1, n_2, \dots, n_m}$ be a complete multipartite graph with vertex partition
\[
V = V_1 \sqcup V_2 \sqcup \cdots \sqcup V_m,
\]
and let $F \subseteq V$ be such that $|F \cap V_i| = a_i$ for each $i \in [m]$.

Then $F \in \PF_t(G)$ if and only if one of the following holds:
\begin{enumerate}
    \item $|F| < t$.

    \item $|F| \ge t$, and there do not exist integers $a_1', \dots, a_m'$ such that:
    \begin{itemize}
        \item $0 \le a_i' \le a_i$ for all $i \in [m]$,
        \item $\displaystyle \sum_{i=1}^m a_i' = t,$
        \item if $j \in [m]$ satisfies $a_j' = \max\{a_i' : i \in [m]\}$, then
        \[
        a_j' - \sum_{i \neq j} a_i' \le 1.
        \]
    \end{itemize}
\end{enumerate}
\end{lemma}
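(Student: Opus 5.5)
The plan is to reduce the statement to the existence of a Hamiltonian path in a smaller complete multipartite graph, and then invoke \Cref{lem: number of vertices in a longest path}.

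\textbf{Step 1: reduce to sub-multisets of size exactly $t$.} For $F\subseteq V$, the induced subgraph $G[F]$ is itself a complete multipartite graph. Its parts are the sets $F\cap V_i$ of sizes $a_i$, with empty parts discarded. Hence $G[F]$ contains a copy of $P_t$ if and only if there is a subset $F'\subseteq F$ with $|F'|=t$ such that $G[F']$ has a Hamiltonian path. Indeed, the vertex set of any $P_t$ in $G[F]$ is such an $F'$, and the path's edges lie in $G[F']$ because $G[F']$ is induced. Conversely, a Hamiltonian path of $G[F']$ is a $P_t$ in $G[F]$.

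Choosing such an $F'$ amounts to choosing integers $0\le a_i'\le a_i$ with $\sum a_i'=t$. Whether $G[F']$ has a Hamiltonian path depends only on these numbers, since $G[F']\cong K_{a_1',\dots,a_m'}$ after removing zero parts.

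\textbf{Step 2: Hamiltonicity criterion.} Apply \Cref{lem: number of vertices in a longest path} to $K_{a'_1,\dots,a'_m}$, with the nonzero parts reordered increasingly. Let $a_j'$ be a maximum and set $R'=t-a_j'$. The longest path has $t$ vertices exactly when $a_j'\le R'+1$, that is, when $a_j'-\sum_{i\ne j}a_i'\le 1$.

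If the maximum is attained by several indices, the condition is the same for each of them, since it depends only on the value $a'_j$. So the phrase ``if $j$ satisfies $a_j'=\max$'' is unambiguous.

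Two degenerate cases need a check:
\begin{itemize}
\item Only one nonzero part, so $G[F']$ is edgeless. The condition then reads $t\le 1$, matching the fact that an edgeless graph has a Hamiltonian path only for $t\le1$.
\item $t\ge 2$ with all parts nonzero. Here the lemma applies directly; it assumes $m\ge2$ parts.
\end{itemize}
For $t=1$ any single vertex works, which is consistent since $|F|\ge t$ forces $F$ to be nonempty.

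\textbf{Step 3: assemble the equivalence.} If $|F|<t$, no such $F'$ exists, so $F$ is a face. This gives case (1). If $|F|\ge t$, then by Steps 1 and 2, $F$ fails to be a face exactly when some admissible tuple $(a_i')$ satisfies the inequality, which gives case (2).

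The only delicate point is Step 2: making sure \Cref{lem: number of vertices in a longest path} is applied correctly to the reduced graph with zero parts removed, and handling the single-part case where that lemma's hypothesis $m\ge2$ fails. I expect the rest to be routine.
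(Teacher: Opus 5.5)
Your proposal is correct and follows essentially the same route as the paper: pass to a $t$-element subset $U\subseteq F$ with $G[U]\cong K_{a_1',\dots,a_m'}$, and use \Cref{lem: number of vertices in a longest path} to decide exactly when $G[U]$ has a Hamiltonian path. The differences are minor: the paper gets the necessity of $a_j'-\sum_{i\neq j}a_i'\le 1$ directly from the alternation argument rather than from that lemma, and it leaves implicit the removal of zero parts and the single-nonzero-part case (where the lemma's hypothesis $m\ge 2$ fails), which you handle explicitly.
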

\begin{proof}
If $|F|<t$ then $G[F]$ cannot contain a path on $t$ vertices.
Hence $F\in\PF_t(G)$.

We show that, for $|F|\geq t$,  
\[
F\notin\PF_t(G)
\iff
\text{there exist }a_1',\dots,a_m'\text{ as in (2)}.
\]

($\Rightarrow$) Suppose $F\notin\PF_t(G)$. Then $G[F]$ contains a copy of
$P_t$. Let $U\subseteq F$ be the set of vertices of such a path, and define
$a_i':= |U\cap V_i|$.
Then $0\le a_i'\le a_i$ for all $i$, and $\sum_{i=1}^m a_i'=|U|=t$.

Let $j\in[m]$ with $a_j' = \max\{a_i' : i\in[m]\}$.
Along a path, two consecutive vertices cannot lie in the same part of a complete
multipartite graph. Therefore, each vertex of $U\cap V_j$, except possibly one,
must be separated by a vertex of $U\setminus V_j$. Hence
\[
a_j'\le |U\setminus V_j|+1
=\sum_{i\neq j}a_i'+1,
\]
equivalently,
\begin{equation}
a_j'-\sum_{i\neq j}a_i'\le 1. \label{inequality1}
\end{equation}
So the required integers $a_1',\dots,a_m'$ exist.

($\Leftarrow$) Conversely, suppose there exist integers $a_1',\dots,a_m'$
satisfying the conditions in (2). For each $i\in[m]$, choose a subset
$W_i\subseteq F\cap V_i$ with $|W_i|=a_i'$, and set
\[
U:=\bigsqcup_{i=1}^m W_i.
\]
Then $|U|=\sum_i a_i'=t$ and
\[
G[U]\cong K_{a_1',\dots,a_m'}.
\]

Relabel indices so that $a_m' = \max\{a_i' : i\in[m]\}$. The inequality in (2) gives
\[
a_m'\le \sum_{i=1}^{m-1}a_i'+1.
\]
By \Cref{lem: number of vertices in a longest path} and the inequality \eqref{inequality1}, the graph $G[U]$ contains a path on $t$ vertices. 

Hence $G[F]$ contains a $P_t$ and therefore $F\notin\PF_t(G)$.

Thus, for $|F|\ge t$, the set $F$ lies in $\PF_t(G)$ if and only if no tuple $(a_1',\dots,a_m')$ satisfying the above conditions exists.
\end{proof}

We now translate the path-avoidance condition into a simplicial decomposition condition. 
Before that, we prove a technical result, which, as a numerical reformulation, is the key consequence of the previous Lemma.

\begin{lemma}\label{lemma: key tech}
Let $F \subset V$ with $a_i = |F \cap V_i|,\, s = |F|, \text{~and} M = \max_i a_i$.
Then $G[F]$ contains a path on $t$ vertices if and only if
$t \leq 2(s - M) + 1$.
\end{lemma}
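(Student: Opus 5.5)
The plan is to reduce the statement to \Cref{lem: number of vertices in a longest path} applied to the induced subgraph $G[F]$. Since $G$ is complete multipartite, $G[F]\cong K_{a_1,\dots,a_m}$ (after discarding empty parts). Moreover, $G[F]$ contains a path on $t$ vertices if and only if $t\le \lambda(G[F])+1$: a subpath of a path is again a path, so every length up to the maximum is realized. It therefore suffices to show
\[
\lambda(G[F])+1=\min\bigl(s,\;2(s-M)+1\bigr)
\]
and then to check that the condition $t\le \min(s,2(s-M)+1)$ reduces to $t\le 2(s-M)+1$.

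For the formula I apply \Cref{lem: number of vertices in a longest path} to $K_{a_1,\dots,a_m}$, with parts reordered so that the largest part has size $M$. In that lemma $S$ becomes $s$ and $R$ becomes $s-M$. If $M\le s-M+1$, the longest path has $s$ vertices, and in this case $s\le 2(s-M)+1$. If $M>s-M+1$, it has $2(s-M)+1$ vertices, which is then smaller than $s$. In both cases the value is $\min(s,2(s-M)+1)$. There are some degenerate cases to dispose of: if $F$ meets only one part, or $F=\emptyset$, then $s-M=0$. The graph is then edgeless, and the longest path has $1$ vertex when $s\ge1$, which agrees with $\min(s,1)$. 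For $s=0$ both sides are empty, and here $t\ge 1$ is implicit. The lemma was stated for $m\ge2$ parts, so I will note that the same upper-bound and alternation argument goes through verbatim for a single part.

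The main obstacle is the comparison with $s$. As literally stated, the criterion $t\le 2(s-M)+1$ omits the constraint $t\le s$. In the regime $M\le s-M+1$ we can have $2(s-M)+1>s$, for example $F$ balanced with $M$ small, and then a $t$ with $s<t\le 2(s-M)+1$ would satisfy the inequality without any path on $t$ vertices. I would handle this by reading the statement in the context where it is used, namely faces with $|F|\ge t$, the case of interest after \Cref{lem: faces for CMG}(1). Under the standing hypothesis $s\ge t$, the minimum condition is exactly $t\le 2(s-M)+1$. I will state this hypothesis explicitly at the start, remarking that for $s<t$ the set $F$ is trivially path-free.

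With $s\ge t$ assumed, the proof is immediate from the two displayed facts. It could alternatively be phrased through \Cref{lem: faces for CMG}: choose $a_i'\le a_i$ summing to $t$ with the max-part inequality. The direct longest-path route is cleaner, so that is the one I would write.
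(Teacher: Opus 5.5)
Your argument is correct once the hypothesis $s\ge t$ is added, and you are right that the lemma needs it. For $F$ with two vertices in each of two parts and $t=5$ we have $t\le 2(s-M)+1=5$, yet $K_{2,2}$ contains no $P_5$. Adding the hypothesis costs nothing, because \Cref{lem:decomposition-CMG} handles $s<t$ separately. The paper's own proof also uses it silently: in its converse, the step $a_j'=t-B\le M$ is exactly $t\le B+M=s$.

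Your route is genuinely different from the paper's. The paper never computes $\lambda(G[F])$; it argues through the tuple criterion of \Cref{lem: faces for CMG}. In the forward direction it bounds $a_j'$ from both sides by $t-(s-M)$ and $(s-M)+1$. For the converse it fixes $j$ with $a_j=M$, chooses the other $a_i'$ with sum $\min(B,t-1)$, and sets $a_j'=t-\sum_{i\ne j}a_i'$. Working at the level of part sizes forces one to track which index maximizes $a'$ rather than $a$. The paper checks the alternation inequality only at the fixed $j$, whereas \Cref{lem: faces for CMG} needs it at an index maximizing $a'$. For example, if $F$ meets two parts in $5$ vertices each and $t=5$, the prescribed choice is $a'=(1,4)$, which violates the inequality at its maximizing index; a balanced choice such as $(3,2)$ would be needed. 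Your argument combines $G[F]\cong K_{a_1,\dots,a_m}$, the formula $\lambda(G[F])+1=\min(s,2(s-M)+1)$ from \Cref{lem: number of vertices in a longest path}, and closure of path lengths under taking subpaths. It hands every realizability question to the longest-path lemma and shows the missing condition $t\le s$ as the first term of the minimum. The only extra cost is treating empty parts and a single nonempty part separately, which you do.
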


\begin{proof}
Let $F \subseteq V$, and for each $i \in [m]$ set
\[
a_i = |F \cap V_i|,\qquad
s = |F|,\qquad
M = \max_i a_i.
\]
We prove both directions by extracting quantitative constraints from the path-existence criterion established earlier. 
Throughout, let $j\in [m]$ denote any index achieving $a_j'=\max_i a_i'$ in the relevant integer tuple.

\smallskip

Suppose first that $G[F]$ contains a path on $t$ vertices.
By \Cref{lem: faces for CMG}, there exist integers $a_1',\dots,a_m'$ such that
\[
0 \le a_i' \le a_i,\qquad
\sum_{i=1}^m a_i' = t,
\]
and if $j$ satisfies
\[
a_j' = \max_i a_i',
\]
then
\[
a_j' - \sum_{i\neq j} a_i' \le 1.
\]
Since $\sum_{i\neq j} a_i' = t-a_j'$, the path-alternation condition from the previous Lemma gives $2a_j'\leq t+1$. 
On the other hand, since $a_j'\leq a_j\leq M$ and $\sum_{i\neq j}a_i'\leq \sum_{i\neq j} a_i = s - M$, we get $a_j' = t-\sum_{i\neq j} a_i'\geq t-(s-M)$. 
Combining the lower and upper bounds on $a_j'$:
\[
t-(s-M) \leq a_j' \leq (s-M) +1,
\]
and subtracting gives $t\leq 2(s-M)+1$. 

\smallskip

Now for the converse, choose an index $j$ such that $a_j=M$, and set $B := s-M = \sum_{i\neq j} a_i$.
Then $t \le 2B+1$. 
We now construct integers $a_1',\dots,a_m'$ satisfying the conditions of \Cref{lem: faces for CMG}.

Choose integers $a_i'$ for $i\neq j$ such that $0 \le a_i' \le a_i$ and 
$\sum_{i\neq j} a_i' = \min(B,t-1)$.

Define
\[
a_j' := t-\sum_{i\neq j} a_i'.
\]
Then $\sum_{i=1}^m a_i' = t$.
Since $\sum_{i\neq j} a_i' \le B$,
we have $a_j' \ge t-B$. 
Also,
\[
a_j'
=
t-\sum_{i\neq j} a_i'
\le
t-(t-1)=1
\]
when $B\ge t-1$, while if $B<t-1$, then
\[
a_j'=t-B \le B+1
\]
because $t\le 2B+1$.

Thus in all cases,
\[
a_j' \le \sum_{i\neq j} a_i' +1.
\]

Finally, since
\[
a_j'=t-\sum_{i\neq j} a_i' \le t-(t-1)=1 \le M=a_j
\]
or
\[
a_j'=t-B \le M
\]
according as $B\ge t-1$ or $B<t-1$, we have
$0\le a_j' \le a_j$.

Hence the integers $a_1',\dots,a_m'$ satisfy all conditions of \Cref{lem: faces for CMG}. 
Therefore $G[F]$ contains a path on $t$ vertices.
\end{proof}

\subsection{Structural Decomposition}

We now use this result to derive a simplicial decomposition of the path-free complex. 
The union of join complexes captures all faces satisfying the above inequality, while the remaining faces arise purely from the size constraint.

\begin{lemma}\label{lem:decomposition-CMG}
Let $G$ be the complete multipartite graph introduced above. For each $j$, set
\[
W_j = \bigcup_{i \ne j} V_i.
\]
Then the $t$-path-free complex $\PF_t(G)$ satisfies
\begin{equation}\label{eq: structural decomposition}
\PF_t(G)=
\left(
\bigcup_{j=1}^{m}
\left(
\Delta_{V_j} * (\Delta_{W_j})^{[k-2]}
\right)
\right)
\cup
(\Delta_V)^{[t-2]}.
\end{equation}
\end{lemma}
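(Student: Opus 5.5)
The plan is to prove \eqref{eq: structural decomposition} by double inclusion. Both inclusions reduce to a face-membership criterion coming from \Cref{lem: faces for CMG} and \Cref{lemma: key tech}. For $F\subseteq V$ write $a_i=|F\cap V_i|$, $s=|F|$ and $M=\max_i a_i$. The criterion I aim for is
\[
F\in\PF_t(G)\iff s\le t-1\ \text{ or }\ s-M\le k-1 .
\]
The first alternative is clause (1) of \Cref{lem: faces for CMG}.

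For the second alternative, assume $s\ge t$. By \Cref{lemma: key tech}, $G[F]$ contains $P_t$ if and only if $t\le 2(s-M)+1$. So $F$ is a face exactly when $2(s-M)\le t-2$. Since $s-M$ is an integer, this is equivalent to $s-M\le\lfloor (t-2)/2\rfloor=k-1$, and that equivalence is a one-line parity check on $t$.

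One caveat needs care, and I expect it to be the only real obstacle. The converse direction of \Cref{lemma: key tech} uses $a_j'=t-B\le M$, and this holds only when $t\le s$. I will therefore invoke that lemma only in the regime $s\ge t$, and use clause (1) directly when $s<t$. With this split no hidden assumption is used.

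Next I identify the two pieces of the right-hand side. The pure skeleton $(\Delta_V)^{[t-2]}$ of a simplex equals its ordinary $(t-2)$-skeleton, so its faces are exactly the $F$ with $|F|\le t-1$. A set $F$ lies in $\Delta_{V_j}*(\Delta_{W_j})^{[k-2]}$ if and only if $|F\cap W_j|\le k-1$, that is, $s-a_j\le k-1$. The degenerate case $k=1$ (so $t\in\{2,3\}$) must be read with the convention $(\Delta_{W_j})^{[-1]}=\{\emptyset\}$. Under that convention the join is $\Delta_{V_j}$ itself, which matches the condition $|F\cap W_j|\le 0$.

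Now the two inclusions follow. For ``$\supseteq$'': if $|F|\le t-1$ then $F$ is a face. If $s-a_j\le k-1$ for some $j$, then $s-M\le s-a_j\le k-1$, so $F$ is a face by the criterion. For ``$\subseteq$'': if $F$ is a face with $s\ge t$, then $s-M\le k-1$. Choosing $j$ with $a_j=M$ gives $|F\cap W_j|=s-M\le k-1$, so $F$ lies in the $j$-th join. Both sides are simplicial complexes with the same faces, so they are equal.
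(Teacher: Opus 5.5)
Your proposal is correct and takes essentially the same route as the paper. Both arguments derive the face criterion $s<t$ or $s-M\le k-1$ from \Cref{lemma: key tech}, choose $j$ with $a_j=M$ to place $F$ in the $j$-th join, and identify $(\Delta_V)^{[t-2]}$ with the sets of size at most $t-1$. Your extra care tightens three points the paper leaves implicit: you invoke \Cref{lemma: key tech} only when $s\ge t$ (its converse genuinely needs $t\le s$), you spell out the reverse inclusion via $s-M\le s-a_j$, and you fix the convention for $k=1$.
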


\begin{proof}
Let $F \subseteq V(G)$. Set $s = |F|, \; M := \max_i |F\cap V_i|$ as before. 
Then $F \in \PF_t(G)$ if and only if
\[
s < t \quad \text{or} \quad 2(s-M)+1 < t.
\]
The second inequality
is equivalent to
\[
s - M \le k-1.
\]

Hence
\[ F \in \PF_t(G)
\quad \Longleftrightarrow \quad
s < t \; \text{or} \; s - M \le k-1.
\]

\medskip

\noindent\textbf{(1) The condition $s - M \le k-1$.}

Choose $j$ such that $a_j = M$. Then
\[
s - M = \sum_{i \ne j} a_i = |F \cap W_j|.
\]
Therefore
\[
s - M \le k-1
\quad \Longleftrightarrow \quad
|F \cap W_j| \le k-1.
\]

This means that $F \cap W_j$ is a face of the $(k-2)$-skeleton $(\Delta_{W_j})^{[k-2]}$,
while $F \cap V_j$ is arbitrary. Hence
\[
\F \in \Delta_{V_j} * (\Delta_{W_j})^{[k-2]}.
\]

Since such an index $j$ exists whenever $s-M \le k-1$, we obtain
\[
\{F : s - M \le k-1\} = \bigcup_{j=1}^m
\bigl(
\Delta_{V_j} * (\Delta_{W_j})^{[k-2]}
\bigr).
\]

\medskip

\noindent\textbf{(2) The condition $s < t$.}

This condition depends only on the size of $F$, and is equivalent to
\[
F \in (\Delta_V)^{[t-2]}.
\]

Since $F\in \PF_t(G)$ if and only if at least one of the conditions $s<t$ or $s-M\leq k-1$ holds, the path-free complex is the union of the two families identified in (1) and (2) above, giving the stated decomposition.
\end{proof}

\begin{remark}\label{rem: dim}
Since
$
|V_j| \le \left\lceil \frac{t}{2} \right\rceil,
$
we have
$
|V_j|+k-2
\le
\left\lceil \frac{t}{2} \right\rceil+k-2
\le t-2.
$
Hence, whenever
$
|V_j| \le \left\lceil \frac{t}{2} \right\rceil,
$
the complex
$
\Delta_{V_j} * (\Delta_{W_j})^{[k-2]}
$
has dimension at most $t-2$, and therefore,
\[
\Delta_{V_j} * (\Delta_{W_j})^{[k-2]}
\subseteq
(\Delta_V)^{[t-2]}.
\]

Thus, only those parts $V_j$ with
$
|V_j| > \left\lceil \frac{t}{2} \right\rceil
$
contribute simplices not already contained in
$
(\Delta_V)^{[t-2]}.
$
Let
$$
A=\left\{j\in[m] : |V_j| > \left\lceil \frac{t}{2} \right\rceil\right\}.
$$
Then
$$
\PF_t(G)=
\left(
\bigcup_{j\in A}
\bigl(
\Delta_{V_j} * (\Delta_{W_j})^{[k-2]}
\bigr)
\right)
\cup
(\Delta_V)^{[t-2]}.
$$

Consequently,
\[
\dim(\PF_t(G))=
\begin{cases}
t-2, & \text{if } A=\emptyset,\\[4pt]
|V_j|+k-2, & \text{if } A\neq\emptyset,\ \text{where } j=\max A.
\end{cases}
\]

In particular, the above description yields the following consequences.
\begin{enumerate}
    \item If $t\ge 2n_m-1$, then \(A=\emptyset\), and hence
    \[
    \PF_t(G)=(\Delta_V)^{[t-2]}.
    \]

    \item If $t\ge 2n_{m-1}-1
    \quad\text{and}\quad
    n_{m-1}\neq n_m$, then \(A=\{m\}\). Consequently,
    \[
    \PF_t(G)=
    \bigl(
    \Delta_{V_m} * (\Delta_{W_m})^{[k-2]}
    \bigr)
    \cup
    (\Delta_V)^{[t-2]}.
    \]
\end{enumerate}

\end{remark}

\subsection{Sequential Cohen–Macaulayness and Vertex Decomposability}
We now analyze Cohen-Macaulay properties of $\PF_t(G)$.

\begin{lemma}\label{lem: SCM for CMG}
    If $t\le 2n_{m-1}-2$, then $\PF_t(G)$ is not sequentially Cohen-Macaulay.
\end{lemma}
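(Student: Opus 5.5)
The plan is to exhibit, for a suitable dimension $d$, a face $\sigma$ of the pure skeleton $\PF_t(G)^{[d]}$ whose link is disconnected but has dimension at least $1$. This gives $\widetilde H_0(\lk(\sigma);\mathbb K)\neq 0$ with $0<\dim\lk(\sigma)$. Hence $\PF_t(G)^{[d]}$ is not Cohen--Macaulay, and by \Cref{def:SCM} the complex $\PF_t(G)$ is not sequentially Cohen--Macaulay. The input is the description of faces from the proof of \Cref{lem:decomposition-CMG}: $F\in\PF_t(G)$ if and only if $|F|<t$ or $|F|-\max_i|F\cap V_i|\le k-1$.

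\textbf{Choice of $d$.} Write $n:=n_{m-1}\le n_m$. The hypothesis $t\le 2n-2$ gives $k\le n-1$ and $\lceil t/2\rceil\le n-1$. Set $d:=n+k-2$, so a $d$-face has $n+k-1\ge \lceil t/2\rceil+k+1\ge t+1$ vertices. Hence a $d$-face cannot be a face merely because of the size condition. Every $d$-face $F$ therefore satisfies $|F\cap W_j|\le k-1$, where $j$ is an index attaining $\max_i|F\cap V_i|$. Since $|F|=n+k-1$, this forces $|F\cap V_j|\ge n$, so $n_j\ge n$. The two sets $V_{m-1}\cup C$ with $C\subseteq V_m$, $|C|=k-1$, and $B\cup X$ with $B\subseteq V_{m-1}$, $|B|=k-1$, $X\subseteq V_m$, $|X|=n$, are both $d$-faces. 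So $d$-faces exist and $\PF_t(G)^{[d]}$ is nonempty of dimension $d$.

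\textbf{Case $k\ge 2$.} Fix $B\subseteq V_{m-1}$ and $C\subseteq V_m$ with $|B|=|C|=k-1$, and take $\sigma=B\cup C$. Let $F\supseteq\sigma$ be a $d$-face. If the dominant part is some $V_j$ with $j\notin\{m-1,m\}$, then $F\cap W_j\supseteq B\cup C$ has $2k-2>k-1$ elements, which is impossible. If the dominant part is $V_m$, then $F\cap W_m=B$ exactly, and $F=B\cup X$ with $C\subseteq X\subseteq V_m$, $|X|=n$. If the dominant part is $V_{m-1}$, then $F\cap W_{m-1}=C$, so $F=V_{m-1}\cup C$. (When $n_m>n$ only the first of these two occurs with $V_{m-1}$ dominant in the sense of size $n$; this is fine, it is still one facet.) Thus the facets of $\lk_{\PF_t(G)^{[d]}}(\sigma)$ are:
\begin{itemize}
\item $V_{m-1}\setminus B$, and
\item the $(n-k+1)$-subsets of $V_m\setminus C$.
\end{itemize}
These lie on disjoint vertex sets, so the link is disconnected. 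All its facets have dimension $n-k\ge 1$. This is the non-Cohen--Macaulay witness.

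\textbf{Case $k=1$, i.e.\ $t\in\{2,3\}$.} Take $\sigma=\emptyset$. Here $d=n-1$, which is at least $1$ because $n\ge\lceil t/2\rceil+1\ge 2$. Each $d$-face has size $n\ge t$ and satisfies $|F\cap W_j|=0$, so it lies inside a single part. The pure skeleton $\PF_t(G)^{[d]}$ therefore contains facets inside $V_{m-1}$ and inside $V_m$, which are disjoint. So it is disconnected of dimension $\ge 1$, and hence not Cohen--Macaulay.

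\textbf{Main obstacle.} The delicate step is the bookkeeping in the case $k\ge2$: one must verify that every $d$-face containing $\sigma$ falls into exactly the two listed families. This includes ruling out ties for the maximum part and other parts $V_j$ becoming dominant. The inequality $2k-2>k-1$ handles other parts. The size count $|F|=n+k-1$ with $|F\cap W_j|\le k-1$ handles which part can dominate. One also has to check that both families are nonempty, using $n-k+1\le n_m-k+1=|V_m\setminus C|$ and $k-1<n$.
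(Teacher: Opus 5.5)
Your argument is correct and is essentially the paper's proof: the same witness face $\sigma$ ($k-1$ vertices from each of $V_{m-1}$ and $V_m$) has a disconnected link of dimension at least $1$ in a pure skeleton. The differences are that you work in the pure $(n_{m-1}+k-2)$-skeleton rather than the pure $(t-1)$-skeleton, which makes the classification of facets through $\sigma$ cleaner, and that you treat $k=1$ separately, which correctly accounts for facets lying in the other parts (the paper's link description overlooks these when $k=1$). One small slip: $\lceil t/2\rceil\le n-1$ gives $n+k-1\ge\lceil t/2\rceil+k=t$, not $t+1$ (equality occurs at $t=2n_{m-1}-2$), but $|F|\ge t$ is all you need to exclude the size condition.
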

\begin{proof}
Let $\Delta = \bigl(\PF_t(G)\bigr)^{[t-1]}$ be the pure $(t-1)$-skeleton. 

\medskip
Let 
\[
F=\{v^{(m-1)}_1,\ldots,v^{(m-1)}_{k-1}, v^{(m)}_1,\ldots,v^{(m)}_{k-1}\}.
\]
Then
\[  |F\cap V_{m-1}| = |F\cap V_m| = k-1. \]

The induced subgraph $G[F]$ is the complete bipartite graph $K_{k-1,k-1}$, whose longest path has $2(k-1)$ vertices. Since $2k-2<t$, we have $F\in\PF_t(G)$. We can enlarge $F$ to a set $F'\supseteq F$ with $|F'|=t$ by adding vertices from either $V_m\setminus F$ or $V_{m-1}\setminus F$ so that the induced subgraph $G[F']$ is a complete bipartite graph whose longest path has $2k-1$ vertices. Hence, $F'\in\PF_t(G)$ and is a facet of $\Delta$. Therefore, $F\in\Delta$.

Thus $\lk_{\Delta}(F)$ is a pure complex of dimension $t-2k+1$. 

Let
\[
A = V_{m-1} \setminus F, \quad B = V_m \setminus F.
\]

A set $F' \subseteq V \setminus F$ is a face of $\mathrm{lk}_\Delta(F)$ if and only if
$F \cup F' \in \Delta$, which is equivalent to $|F \cup F'| \le t$ and it contains in a face of dimension $t-1$. 
Since
$|F| = 2(k-1)$, this gives $|F'| \le t - 2k + 2$.

Moreover, if $F'$ contains vertices from both $A$ and $B$ or a vertex from other part, then $G[F \cup F']$
contains a path on $t$ vertices, so $F'$ does not lie in the link. Hence
\[
\mathrm{lk}_\Delta(F)
=
(\Delta_A)^{[t-2k+1]} \;\cup\; (\Delta_B)^{[t-2k+1]}.
\]

Since $t \le 2n_{m-1} - 2$, we have $k-1 \le n_{m-1} - 2$, so both $A$ and $B$
are nonempty. Thus $\mathrm{lk}_\Delta(F)$ is a disconnected simplicial complex
of dimension at least $1$, and hence
\[
\widetilde{H}_0(\mathrm{lk}_\Delta(F)) \neq 0.
\]

Therefore the pure skeleton $\Delta$ is not Cohen–Macaulay, and as a result the complex $\PF_t(G)$ is not
sequentially Cohen–Macaulay.
\end{proof}

\begin{theorem}\label{them: General V.D}
Let $V$ be a vertex set and let $W \subseteq V$. Suppose that
\[
-1 \le a \le |W^{c}|-1
\qquad \text{and} \qquad
-1 \le b \le |V|-1.
\]
Then the simplicial complex
\[
\Sigma
=
\left(\Delta_{W} * (\Delta_{W^{c}})^{[a]} \right)
\cup
(\Delta_{V})^{[b]}
\]
is vertex decomposable.
\end{theorem}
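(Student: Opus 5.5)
We argue by induction on $|W|$, shedding the vertices of $W$ one at a time; when $W$ is exhausted, what remains is a pure skeleton of a simplex. Write $U=W^{c}$ and $\Sigma(W,a,b)$ for the complex in the statement. Two facts are used throughout: the pure skeleton of a simplex is vertex decomposable (\cite[Lemma 3.10]{RW09}), and a join of vertex decomposable complexes is vertex decomposable (\Cref{join property}).

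\textbf{Base cases.} If $W=\emptyset$, then $\Sigma=(\Delta_U)^{[a]}\cup(\Delta_V)^{[b]}$. Since $V=U$ here, this equals $(\Delta_V)^{[\max(a,b)]}$, which is vertex decomposable. If $a=-1$, the first piece is $\Delta_W$ (up to the void/empty convention), and $\Sigma=\Delta_W\cup(\Delta_V)^{[b]}$. If $b\ge |W|-1+a+1$, the first piece lies in the second, so $\Sigma$ is a pure skeleton. These degenerate situations can either be treated directly or absorbed into the induction.

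\textbf{Inductive step.} Pick $w\in W$ and put $V'=V\setminus\{w\}$, $W'=W\setminus\{w\}$. Faces avoiding $w$ are exactly $\Delta_{W'}*(\Delta_U)^{[a]}$ together with the $b$-faces of $V'$. Hence
\[
\del_\Sigma(w)=\bigl(\Delta_{W'}*(\Delta_U)^{[a]}\bigr)\cup(\Delta_{V'})^{[\min(b,|V'|-1)]},
\]
which is vertex decomposable by induction on $|W|$. For the link, a set $G\not\ni w$ lies in $\lk_\Sigma(w)$ exactly when $G\cup\{w\}\in\Sigma$, that is, when $|G\cap U|\le a+1$ or $|G|\le b$. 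Therefore
\[
\lk_\Sigma(w)=\bigl(\Delta_{W'}*(\Delta_U)^{[a]}\bigr)\cup(\Delta_{V'})^{[b-1]}.
\]
This has the same shape with $b$ replaced by $b-1\ge -2$; if $b=-1$, the second piece is void and the link is a join of a simplex and a skeleton. So the link is also vertex decomposable by induction. It is cleanest to run the induction on the pair $(|W|,b)$, or simply on $|W|$ with $b$ arbitrary, since the statement quantifies over all admissible $b$.

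\textbf{Main obstacle: the shedding condition.} We must show every facet of $\del_\Sigma(w)$ is a facet of $\Sigma$. Facets of the deletion have two kinds.
\begin{enumerate}
\item The first kind is $W'\cup T$ with $|T|=a+1$, $T\subseteq U$. This is contained in the larger face $W\cup T$ of $\Sigma$, so it is never a facet of $\Sigma$. It is still automatically fine, because it is not maximal in the deletion either: it extends there only if… this is exactly where the condition must be checked.
\item The second kind is a $b$-subset $B$ of $V'$ not contained in the first piece. Then $|B\cap U|\ge a+2$, so $B\cup\{w\}\notin\Sigma$, and $B$ is not contained in a larger face of the first piece. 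Hence $B$ is a facet of $\Sigma$ unless $|B|<b+1$, which happens only when $|V'|\le b$, i.e.\ $b=|V|-1$. In that case $\Sigma$ is a simplex and we are done.
\end{enumerate}
The real subtlety is case 1. For $W'\cup T$ to be a facet of $\del_\Sigma(w)$ it must not be contained in a $b$-face of $V'$. It is contained in one iff $|W'|+a+1\le b+1$ (with $b\le |V'|-1$), so it is a facet of the deletion only when $|W|+a+1>b+1$, and then $W\cup T$ shows it is not a facet of $\Sigma$. So shedding along $W$ fails here.

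The fix is to shed the vertices of $U$ first instead. For $u\in U$, facets of the deletion of type $W\cup T$ with $T\subseteq U\setminus u$ are also facets of $\Sigma$, because adding $u$ would exceed $a+1$ vertices in $U$; the $b$-sets are handled as in case 2. The link of $u$ is $\bigl(\Delta_W*(\Delta_{U\setminus u})^{[a-1]}\bigr)\cup(\Delta_{V\setminus u})^{[b-1]}$, and the deletion has the same form with $U$ replaced by $U\setminus u$ and $a$ capped at $|U|-2$. We therefore induct on $|U|$, with base cases $U=\emptyset$ (a simplex) and $a=-1$. In the base case $a=-1$, $\Sigma=\Delta_W\cup(\Delta_V)^{[b]}$, and we shed vertices of $U$ again: the link is $(\Delta_{V\setminus u})^{[b-1]}$, or the whole $\Delta_W$ side if $b$ is large, and the deletion is of the same form. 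The remaining checks are the boundary cases where capping changes the parameters.
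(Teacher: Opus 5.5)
After you abandon shedding from $W$ (which fails exactly as you found) and shed a vertex $u\in W^{c}$ instead, your argument is the paper's. The paper uses the same deletion $\bigl(\Delta_W*(\Delta_{W^{c}\setminus u})^{[a]}\bigr)\cup(\Delta_{V\setminus u})^{[b]}$ and link $\bigl(\Delta_W*(\Delta_{W^{c}\setminus u})^{[a-1]}\bigr)\cup(\Delta_{V\setminus u})^{[b-1]}$, and inducts on $|V|$ (equivalent to your induction on $|W^{c}|$) after first handling the degenerate cases $b\ge a+|W|$, $a=|W^{c}|-1$, $b=|V|-1$, $W=V$, in each of which $\Sigma$ is a pure skeleton or a simplex. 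One point to tighten: in your shedding check for the deletion facets $W\cup T$, exceeding $a+1$ vertices in $W^{c}$ only keeps $W\cup T\cup\{u\}$ out of the join piece, so you also need $|W|+a+2>b+1$, which is exactly what the reduction $b<a+|W|$ provides (the paper instead verifies the equivalent condition that no facet of the link is a facet of the deletion).
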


\begin{proof}
Suppose $b \ge a+|W|$. Every face of $\Delta_W * (\Delta_{W^c})^{[a]}$ has dimension at most $a+|W|$, so it is a face of $(\Delta_V)^{[b]}$. Thus $\Delta_W * (\Delta_{W^c})^{[a]}$ is a subcomplex of $(\Delta_V)^{[b]}$, and hence $\Sigma=(\Delta_V)^{[b]}$. Since the pure skeleton of simplices is vertex decomposable, $\Sigma$ is vertex decomposable.

Thus we may assume that
\[
b<a+|W|.
\]

If $a=|W^c|-1$, then $(\Delta_{W^c})^{[a]}=\Delta_{W^c}$, and hence $\Delta_W * \Delta_{W^c} = \Delta_V$. Similarly, if $b=|V|-1$, then $(\Delta_V)^{[b]}=\Delta_V$. Also, if $W = V$ then $\Sigma = \Delta_V$. In all cases, $\Sigma=\Delta_V$, and therefore $\Sigma$ is vertex decomposable.

Hence we may further assume that $W \subset V$, 
\[
-1\le a<|W^c|-1
\qquad \text{and} \qquad
-1\le b<|V|-1.
\]

We now prove the statement by induction on $|V|$.

\medskip

\noindent
\textbf{Base case:}
If $|V|=1$, then $\Sigma$ is a simplex. Hence $\Sigma$ is vertex decomposable.

\medskip
\noindent
\textbf{Induction hypothesis:}
Assume that the statement holds for all graphs with the number of vertices strictly smaller than $n$.

\medskip

\noindent
\textbf{Induction step:}
Suppose $|V|=n$.

Choose a vertex $v \in W^c$. 
Such a vertex exists since $W \subset V$ implies that $W^c\neq \emptyset$.

We show that $v$ is a shedding vertex.

First consider the deletion:
\[
\del_\Sigma(v)
=
\left(
\Delta_W * (\Delta_{W^c\setminus\{v\}})^{[a]}
\right)
\cup
(\Delta_{V\setminus\{v\}})^{[b]}.
\]

Since $a<|W^c|-1$, we obtain $a\le |W^c\setminus\{v\}|-1$. 
Also, $b\le |V\setminus\{v\}|-1$, because $|V\setminus\{v\}|=|V|-1$.

Therefore $\del_\Sigma(v)$ is of the same form as in the statement, but on the smaller vertex set $V\setminus\{v\}$. 
By the induction hypothesis, the deletion $\del_\Sigma(v)$ is vertex decomposable.

Next consider the link:
\[
\lk_\Sigma(v)
=
\left(
\Delta_W * (\Delta_{W^c\setminus\{v\}})^{[a-1]}
\right)
\cup
(\Delta_{V\setminus\{v\}})^{[b-1]}.
\]

If $a=-1$, then
($(\Delta_{W^c})^{[a]}=\emptyset$, and hence $\lk_\Sigma(v)=(\Delta_{V\setminus\{v\}})^{[b-1]}$.) Otherwise, $-1\le a-1\le |W^c\setminus\{v\}|-1$.

In either case, $\lk_\Sigma(v)$ is of the same form as in the statement on the vertex set $V\setminus\{v\}$. 
Since $-1\le b-1\le |V\setminus\{v\}|-1$, the induction hypothesis implies that $\lk_\Sigma(v)$ is vertex decomposable.

It remains to verify the shedding condition.
Let $F$ be a facet of $\lk_\Sigma(v)$. 
Then either $F=W\cup F'$, where $F'$ is a facet of  $(\Delta_{W^c\setminus\{v\}})^{[a-1]}$, or $F$ is a facet of $(\Delta_{V\setminus\{v\}})^{[b-1]}$

First suppose $F=W\cup F'$. Since $F'$ is a facet of
\[
(\Delta_{W^c\setminus\{v\}})^{[a-1]},
\]
we have $|F'|=a$, so $|F|=|W|+a$. Because $b<a+|W|$, we get $|F|>b$, hence $F$ is not a face of $(\Delta_{V\setminus\{v\}})^{[b]}$.

The inequality $a<|W^c|-1$ yields a vertex $w\in W^c\setminus(F'\cup\{v\})$. Then $F'\cup\{w\}$ is a face of $(\Delta_{W^c\setminus\{v\}})^{[a]}$, so $F\cup\{w\}=W\cup(F'\cup\{w\})$ is a face of $\del_\Sigma(v)$. Thus $F$ is not a facet of $\del_\Sigma(v)$.

Now suppose $F$ is a facet of $(\Delta_{V\setminus\{v\}})^{[b-1]}$. Then $|F|=b$. Since $b<|V|-1$, there exists $w\in V\setminus(F\cup\{v\})$, and $F\cup\{w\}$ is a face of $(\Delta_{V\setminus\{v\}})^{[b]}$, hence of $\del_\Sigma(v)$. Thus $F$ is not a facet of $\del_\Sigma(v)$.

Therefore no facet of $\lk_\Sigma(v)$ is a facet of $\del_\Sigma(v)$, so $v$ is a shedding vertex. Since both $\del_\Sigma(v)$ and $\lk_\Sigma(v)$ are vertex decomposable, $\Sigma$ is vertex decomposable.
\end{proof}

\begin{corollary}\label{Cor: V.D for particular case}
For $t \ge 2n_{m-1}-1$, the $t$-path free complex $\PF_t(G)$ is vertex decomposable. 
\end{corollary}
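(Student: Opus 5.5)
The plan is to combine the structural decomposition of \Cref{lem:decomposition-CMG}, as simplified in \Cref{rem: dim}, with \Cref{them: General V.D}.

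First I would pin down the set $A$ of \Cref{rem: dim} in the range $t\ge 2n_{m-1}-1$. In that range $\lceil t/2\rceil \ge n_{m-1}\ge n_j$ for every $j\le m-1$, so no $j<m$ lies in $A$, and hence $A\subseteq\{m\}$. This argument does not require $n_{m-1}\neq n_m$, so the equal case needs no separate treatment. There are then two cases.
\begin{itemize}
\item If $A=\emptyset$, then $\PF_t(G)=(\Delta_V)^{[t-2]}$, or the full simplex $\Delta_V$ if $t-2>|V|-1$. This is a pure skeleton of a simplex, hence vertex decomposable by the cited classical result (\cite[Lemma 3.10]{RW09}).
\item If $A=\{m\}$, then
\[
\PF_t(G)=\bigl(\Delta_{V_m}*(\Delta_{W_m})^{[k-2]}\bigr)\cup(\Delta_V)^{[t-2]}.
\]
\end{itemize}

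In the second case I would apply \Cref{them: General V.D} with $W=V_m$, $W^c=W_m$, $a=k-2$ and $b=t-2$. The remaining work is checking the hypotheses on $a$ and $b$.
\begin{itemize}
\item The lower bounds hold since $t\ge 2$ gives $k\ge 1$, so $a\ge -1$, and $b\ge 0$.
\item For the upper bounds, if $k-2>|W_m|-1=R-1$, then $(\Delta_{W_m})^{[k-2]}$ is just the full simplex $\Delta_{W_m}$. In that case I replace $a$ by $\min(k-2,R-1)$ without changing the complex.
\item Likewise, if $t-2>|V|-1$, I replace $b$ by $|V|-1$, which again leaves the complex unchanged.
\end{itemize}
With these truncations the hypotheses $-1\le a\le |W^c|-1$ and $-1\le b\le |V|-1$ hold, and \Cref{them: General V.D} gives vertex decomposability.

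I expect the only delicate step to be this bookkeeping at the boundary. One must make sure that truncating $a$ and $b$ to the allowed range genuinely leaves the complex unchanged. One must also make sure the nonstrict inequality $|V_j|\le\lceil t/2\rceil$ used in \Cref{rem: dim} matches the threshold $t\ge 2n_{m-1}-1$ for both parities of $t$. I would check the parities directly: for $t=2n_{m-1}-1$ we have $\lceil t/2\rceil=n_{m-1}$, and for larger $t$ the ceiling only increases.

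The degenerate value $t=1$ falls outside the theorem's hypothesis $t\ge2$. It is harmless anyway, since $\PF_1(G)=\{\emptyset\}$ is a simplex.
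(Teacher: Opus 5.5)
Your proposal is correct and follows the paper's route. The paper uses Remark~\ref{rem: dim} to reduce $\PF_t(G)$ to $\bigl(\Delta_{V_m} * (\Delta_{W_m})^{[k-2]}\bigr)\cup(\Delta_V)^{[t-2]}$ and then invokes Theorem~\ref{them: General V.D}; your additional bookkeeping only makes explicit checks that the paper's one-line proof leaves implicit. Those checks are: clamping $a$ and $b$ into the allowed range, noting that $n_{m-1}\neq n_m$ is not needed, treating the case $A=\emptyset$, and treating $t=1$.
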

\begin{proof}
If $t \ge 2n_{m-1}-1$, then, by Remark~\ref{rem: dim},
\[
\PF_t(G)
=
\bigl(\Delta_{V_m} * (\Delta_{W_m})^{[k-2]}\bigr)
\cup
(\Delta_V)^{[t-2]}.
\]
Therefore, Theorem~\ref{them: General V.D} implies that $\PF_t(G)$ is vertex decomposable.

\end{proof}

\begin{theorem}\label{them: char for CMG}
    Let $G=K_{n_1, n_2, \dots, n_m}$ and let $t \ge 2$. Then the following statements are equivalent:
    \begin{enumerate}
        \item $t \ge 2n_{m-1}-1$.
        \item $\PF_t(G)$ is vertex decomposable.
        \item $\PF_t(G)$ is shellable.
        \item $\PF_t(G)$ is sequentially Cohen-Macaulay.
    \end{enumerate}
\end{theorem}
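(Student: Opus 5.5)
The plan is to close the cycle of implications $(1)\Rightarrow(2)\Rightarrow(3)\Rightarrow(4)\Rightarrow(1)$, using only results already established in this section.

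First, $(1)\Rightarrow(2)$ is \Cref{Cor: V.D for particular case}. I will recheck its reduction step before relying on it. When $t\ge 2n_{m-1}-1$, every $j\le m-1$ satisfies $n_j\le n_{m-1}\le \lceil t/2\rceil$, so such $j$ lie outside the set $A$ of \Cref{rem: dim}. This holds even when $n_{m-1}=n_m$, because then also $n_m\le\lceil t/2\rceil$ and $A=\emptyset$. Hence the decomposition \eqref{eq: structural decomposition} reduces to $\bigl(\Delta_{V_m}*(\Delta_{W_m})^{[k-2]}\bigr)\cup(\Delta_V)^{[t-2]}$. Keeping the $V_m$-term in the $A=\emptyset$ case is harmless, since that term is then contained in $(\Delta_V)^{[t-2]}$ by \Cref{rem: dim}.

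Next I apply \Cref{them: General V.D} with $W=V_m$, $a=k-2$ and $b=t-2$. For this I must verify the hypotheses $-1\le a\le |W_m|-1$ and $-1\le b\le |V|-1$. The lower bounds follow from $t\ge2$, which gives $k\ge1$ and $b\ge0$.

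The upper bounds are the one point needing care. If $k-1>|W_m|=R$, then $(\Delta_{W_m})^{[k-2]}$ should simply be read as the full simplex $\Delta_{W_m}$; equivalently I replace $a$ by $\min(k-2,R-1)$, which yields the same complex. Similarly, if $t-1>S$, I replace $b$ by $S-1$. With these truncations the complex is unchanged and the theorem applies directly.

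The implications $(2)\Rightarrow(3)\Rightarrow(4)$ are exactly the standard chain \eqref{eq: implications}.

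Finally, $(4)\Rightarrow(1)$ is the contrapositive of \Cref{lem: SCM for CMG}: if $t\le 2n_{m-1}-2$, then $\PF_t(G)$ is not sequentially Cohen--Macaulay.

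I expect the main obstacle to lie in this last step, namely checking that the face $F$ used in the proof of \Cref{lem: SCM for CMG} really exists. The construction needs $k-1\le n_{m-1}$ together with nonempty leftovers $A$ and $B$, both of which follow from $t\le 2n_{m-1}-2$. It also needs the extension $F'$ with $|F'|=t$ to stay path-free. Adding $t-2k+2\in\{2,3\}$ vertices from a single part gives a $K_{k-1,\,t-k+1}$. Its longest path has at most $2(k-1)+1=2k-1<t$ vertices when $t$ is even. When $t$ is odd, $t=2k+1$, so $2k-1<t$ again holds.

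I also need that part to have enough spare vertices. This holds because $n_{m-1}\ge k+1$, which follows from $2k\le t\le 2n_{m-1}-2$, and because we only need up to $3$ extra vertices when $k-1\le n_{m-1}-2$. I would confirm that $n_m\ge n_{m-1}$ supplies enough vertices in the part $V_m$. Finally, note that for $t=2$ we get $k=1$ and $F=\emptyset$. In that case the link is the whole skeleton, which is still disconnected with two components, so the argument goes through.
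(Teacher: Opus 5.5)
Your proposal is correct and is exactly the paper's argument: $(1)\Rightarrow(2)$ by Corollary~\ref{Cor: V.D for particular case} (via Remark~\ref{rem: dim} and Theorem~\ref{them: General V.D}), $(2)\Rightarrow(3)\Rightarrow(4)$ by \eqref{eq: implications}, and $(4)\Rightarrow(1)$ as the contrapositive of Lemma~\ref{lem: SCM for CMG}; your truncation of $a$ and $b$ is a useful check that the paper leaves implicit. One small correction to your side check: for odd $t=2k+1$, the three spare vertices needed in $V_{m-1}$ (and likewise in $V_m$, so that the link has facets on both sides) come from the sharper bound $n_{m-1}\ge k+2$, which follows from $2k+1\le 2n_{m-1}-2$ by integrality, not from $n_{m-1}\ge k+1$.
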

\begin{proof}
The result follows from Equation~\ref{eq: implications}, Lemma~\ref{lem: SCM for CMG} and Corollary~\ref{Cor: V.D for particular case}
\end{proof}

\section{Determining the Exact Homotopy Type}\label{sec:CBG-structure-topology}

One of the main topological implications of shellable complexes is that they have the homotopy type of a wedge of spheres \cite[Theorem 12.3]{kozlov2007}. 
Hence the natural question now is to determine whether the path-free complexes of complete multipartite graphs are homotopic to wedges of spheres?
We address this question in the present Section. 
The decomposition established in \Cref{lem:decomposition-CMG} expresses the complex as a union of contractible join complexes with controlled intersections, but the topology is considerably richer. 

In contrast to the general multipartite setting, the decomposition becomes particularly tractable for complete bipartite graphs and when $t=3$. 
In both these cases, the homotopy type is that of a wedge of spheres. 

We begin with complete bipartite graphs, as a first step we convert the path-avoidance conditions into explicit join decompositions of $\PF_t(G)$.

Let $G=K_{m,n}$ be the complete bipartite graph with bipartition
\[
X=\{x_1,\dots,x_m\}, \qquad Y=\{y_1,\dots,y_n\}, \qquad m\le n.
\]

Since every path in $K_{m,n}$ alternates between the two parts, the longest path uses all vertices of the smaller part $X$ and as many vertices of $Y$ as possible. 
Using \Cref{lem: number of vertices in a longest path} we have the formula for the length of the longest path. 

\begin{equation}\label{max-path-length in CBG}
\lambda(K_{m,n})=
\begin{cases}
2m, & \text{if } m<n,\\[2pt]
2m-1, & \text{if } m=n.
\end{cases}
\end{equation}
In particular, by \Cref{path free for max path}, $\PF_t(G)$ is a simplex whenever $t>\lambda(K_{m,n})+1$.

We begin by stating the join decomposition of the path-free complex.

\begin{lemma}\label{lem: structure of the complex CBG1}
Let $k=\lfloor t/2\rfloor$.
\begin{enumerate}
    \item If $t\le 2m$ is even, then
    \[
    \PF_t(G)
    =
    (\Delta_X * (\Delta_{Y})^{[k-2]})
    \cup
    ((\Delta_X)^{[k-2]} * \Delta_Y).
    \]
    \item If $t\le 2m$ is odd, then
    \[
    \PF_t(G)
    =
    (\Delta_X * (\Delta_{Y})^{[k-2]})
    \cup
    ((\Delta_{X})^{[k-1]} * (\Delta_{Y})^{[k-1]})
    \cup
    ((\Delta_{X})^{[k-2]} * \Delta_Y).
    \]
\end{enumerate}
\end{lemma}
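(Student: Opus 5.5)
The plan is to specialize \Cref{lem:decomposition-CMG} (equivalently, the criterion of \Cref{lemma: key tech}) to the bipartite case $m=2$, with $V_1=X$ and $V_2=Y$. A set $F$ is then a face of $\PF_t(G)$ if and only if
\[
|F|<t \quad\text{or}\quad \min(|F\cap X|,|F\cap Y|)\le k-1 .
\]
Indeed, with two parts we have $s-M=\min(a,b)$, where $a=|F\cap X|$ and $b=|F\cap Y|$, and $s-M\le k-1$ is exactly $2(s-M)+1<t$. So I will work purely with the pair $(a,b)$ and describe the faces in each case.

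First, the two join terms $\Delta_X*(\Delta_Y)^{[k-2]}$ and $(\Delta_X)^{[k-2]}*\Delta_Y$ correspond precisely to the conditions $b\le k-1$ and $a\le k-1$ respectively, as in part (1) of the proof of \Cref{lem:decomposition-CMG}. It remains to identify which faces of $(\Delta_V)^{[t-2]}$, namely those with $a+b\le t-1$, are not already covered by these two terms. Such a face must satisfy $a\ge k$ and $b\ge k$, so $a+b\ge 2k$.

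\textbf{Even case} $t=2k$. Here $a+b\le 2k-1$ is incompatible with $a,b\ge k$. Hence $(\Delta_V)^{[t-2]}$ is absorbed into the two join terms, and the union reduces to the two-term expression in (1).

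\textbf{Odd case} $t=2k+1$. The uncovered faces are those with $a,b\ge k$ and $a+b\le 2k$, which forces $a=b=k$. These are exactly the sets with $a\le k$, $b\le k$ not already covered, so the middle term $(\Delta_X)^{[k-1]}*(\Delta_Y)^{[k-1]}$ accounts for them. Conversely, each face of that middle term has $a,b\le k$. Either $\min(a,b)\le k-1$, or $a=b=k$, in which case $|F|=2k<t$; both land in $\PF_t(G)$.

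Two details need checking to justify the skeleton notation. The pure skeleta $(\Delta_X)^{[k-1]}$ and $(\Delta_X)^{[k-2]}$ require $k\le m\le n$ so that faces of the relevant sizes exist. This follows from $t\le 2m$, which gives $k\le m$; for $k$ and $k-1$ we then use that skeleta of a simplex include all smaller faces. The degenerate case $k=1$ (that is, $t=2,3$) uses the conventions $(\Delta)^{[-1]}=\{\emptyset\}$ and $(\Delta)^{[0]}$ equal to the set of vertices. I expect the main obstacle to be only this bookkeeping, together with confirming that the hypothesis $t\le 2m$ makes all pieces well-defined. The set-theoretic identification itself is immediate from \Cref{lemma: key tech}.
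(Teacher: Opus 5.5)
Your proposal is correct, but it reaches the face criterion by a different route from the paper. The paper argues from scratch inside $K_{a,b}$. A path alternates sides, so $P_{2k}$ needs exactly $k$ vertices on each side, and $P_{2k+1}$ needs $k+1$ on one side and $k$ on the other. Negating gives $\min(a,b)\le k-1$ for even $t$, and $\min(a,b)\le k-1$ or $a=b=k$ for odd $t$. The paper stops at this face characterization and leaves implicit both the translation into joins of pure skeleta and the role of $t\le 2m$.

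You instead specialize \Cref{lem:decomposition-CMG} to two parts, where $s-M=\min(a,b)$. You then track the size-truncation term $(\Delta_V)^{[t-2]}$: for even $t$ it is absorbed by the two joins, and for odd $t$ it leaves behind exactly the balanced faces $a=b=k$.

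The paper's argument is self-contained and independent of the multipartite machinery. Yours explains structurally where the middle term $(\Delta_X)^{[k-1]}*(\Delta_Y)^{[k-1]}$ comes from: it is the residue of the truncation term. You also check both inclusions for that term and make the well-definedness of the pure skeleta explicit, namely $k\le m\le n$, and $(\cdot)^{[-1]}=\{\emptyset\}$ when $k=1$.

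One caution: base the argument on the criterion ``$s<t$ or $s-M\le k-1$'' from \Cref{lem:decomposition-CMG}, as in your first display. Do not rest it on \Cref{lemma: key tech} alone, as your closing sentence suggests. As stated, that lemma omits the hypothesis $t\le s$. For $a=b=k$ and $t=2k+1$ it would assert that $K_{k,k}$ contains $P_{2k+1}$, and so would exclude precisely the faces that make up your middle term. Since your criterion keeps the $|F|<t$ disjunct, your proof is unaffected.
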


\begin{proof}
Let $F\subseteq X\sqcup Y$ such that $a=|F\cap X|$ and $b=|F\cap Y|$.
Since the induced subgraph $G[F]$ is the complete bipartite graph $K_{a,b}$, every path in $G[F]$
must alternate between the two bipartition classes.

Suppose first that $t=2k$.
A path on $2k$ vertices alternates between the two parts and therefore uses
exactly $k$ vertices from each part.
Hence $K_{a,b}$ contains a copy of $P_{2k}$ if and only if
$a\ge k$ and $b\ge k$.
Therefore
\[
F\in PF_t(G)
\iff
(a<k)\ \text{or}\ (b<k),
\]
which is equivalent to
\[
a\le k-1 \quad\text{or}\quad b\le k-1.
\]

Now suppose that $t=2k+1$.
A path on $2k+1$ vertices alternates between the two parts and therefore uses
$k+1$ vertices from one part and $k$ vertices from the other.
Consequently $K_{a,b}$ contains a copy of $P_{2k+1}$ if and only if
\[
(a\ge k+1,\ b\ge k)
\quad\text{or}\quad
(a\ge k,\ b\ge k+1).
\]

Negating this condition, we obtain that $F\in PF_t(G)$ if and only if
\[
a\le k-1,
\]
or
\[
b\le k-1,
\]
or else both $a,b\ge k$ but neither exceeds $k$.
The latter possibility is precisely
\[
a=b=k.
\]

Hence
\[
F\in PF_t(G)
\iff
a\le k-1
\ \text{or}\
b\le k-1
\ \text{or}\
(a,b)=(k,k).
\qedhere \]
\end{proof}

Note that the odd case differs from the even case only through the balanced configuration $a=b=k$, which contributes the additional term. We treat the case $t=2m+1$ separately as follows. 

\begin{lemma}\label{lem: facets for t=2m+1}
For  $t=2m+1$ we have 
\[
\PF_{2m+1}(G)
=
(\Delta_X * (\Delta_{Y})^{[m-1]})
\cup
((\Delta_{X})^{[m-2]} * \Delta_Y).
\]
\end{lemma}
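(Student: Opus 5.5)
For $t=2m+1$ we have $k=m$. I would argue directly at the level of faces, exactly as in the proof of \Cref{lem: structure of the complex CBG1}, rather than through the general decomposition. Let $F\subseteq X\sqcup Y$ with $a=|F\cap X|$ and $b=|F\cap Y|$, so that $G[F]\cong K_{a,b}$. The odd-case analysis in the proof of \Cref{lem: structure of the complex CBG1} gives the following criterion. $K_{a,b}$ contains $P_{2m+1}$ if and only if either $a\ge m+1$ and $b\ge m$, or $a\ge m$ and $b\ge m+1$.

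The key simplification is that $a\le |X|=m$ always holds. So the first alternative never occurs, and $G[F]$ contains $P_{2m+1}$ exactly when $a=m$ and $b\ge m+1$. Negating, $F\in\PF_{2m+1}(G)$ if and only if $a\le m-1$ or $b\le m$. I then translate each condition into a join. The condition $b\le m$ with $a$ arbitrary is precisely $F\in \Delta_X*(\Delta_Y)^{[m-1]}$. The condition $a\le m-1$ with $b$ arbitrary is precisely $F\in(\Delta_X)^{[m-2]}*\Delta_Y$. This yields the stated decomposition.

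For consistency with \Cref{lem: structure of the complex CBG1}(2), I would remark how the general odd formula with $k=m$ collapses. Its middle term $(\Delta_X)^{[m-1]}*(\Delta_Y)^{[m-1]}$ is contained in $\Delta_X*(\Delta_Y)^{[m-1]}$. Its first term $\Delta_X*(\Delta_Y)^{[m-2]}$ is enlarged to $\Delta_X*(\Delta_Y)^{[m-1]}$, because the configuration $a=m$, $b=m$ is now safe, and so is $a<m$, $b=m$.

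Two points need a brief check, and I expect them to be the main obstacle. First, pure skeleta of a simplex on fewer vertices than the skeleton index must be interpreted as the full simplex. For example, if $n=m$ then $(\Delta_Y)^{[m-1]}=\Delta_Y$, and the formula correctly gives the full simplex, consistent with \Cref{path free for max path}, since $\lambda(K_{m,m})+1=2m<2m+1$. Second, I need the criterion "$K_{a,b}$ contains $P_{2m+1}$ iff ($a\ge m+1$, $b\ge m$) or ($a\ge m$, $b\ge m+1$)". Necessity comes from alternation along the path. Sufficiency comes from explicitly alternating $m+1$ vertices of the larger side with $m$ vertices of the other side. This should be stated cleanly, since the earlier lemma was phrased only for $t\le 2m$.
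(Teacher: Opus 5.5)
Your proof is correct, but it is organized differently from the paper's.

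\textbf{The paper's route.} The paper never returns to the face criterion. It plugs $k=m$ into the three-term odd decomposition of \Cref{lem: structure of the complex CBG1}(2). (The proof text cites \Cref{lem:decomposition-CMG}, but the displayed formula is the bipartite one.) It then notes that $(\Delta_X)^{[m-1]}=\Delta_X$ since $|X|=m$, so the middle term becomes $\Delta_X*(\Delta_Y)^{[m-1]}$. Finally it absorbs the first term $\Delta_X*(\Delta_Y)^{[m-2]}$ into that middle term.

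\textbf{Your route.} You encode the same fact $|X|=m$ as the constraint $a\le m$ and negate the path criterion directly, getting $a\le m-1$ or $b\le m$.

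\textbf{Comparison.} The key observation is identical; the paper applies it to complexes and you apply it to face cardinalities. The paper's route is shorter once the earlier lemma is available. Yours is self-contained, and it avoids invoking \Cref{lem: structure of the complex CBG1} at $t=2m+1$, outside its stated hypothesis $t\le 2m$. The paper implicitly relies on that lemma's proof going through verbatim at $t=2m+1$, which it does.

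\textbf{Two small remarks.} First, the obstacle you anticipated about skeleta does not arise here. Since $|X|=m$ and $|Y|=n\ge m$, both $(\Delta_X)^{[m-2]}$ and $(\Delta_Y)^{[m-1]}$ are genuine skeleta. They consist of all subsets of size at most $m-1$, resp.\ at most $m$, with $(\Delta_X)^{[-1]}=\{\emptyset\}$ if $m=1$. Second, in your consistency check the middle term is actually equal to $\Delta_X*(\Delta_Y)^{[m-1]}$, not just contained in it. That equality is precisely the paper's argument.
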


\begin{proof}
Applying \Cref{lem:decomposition-CMG} with $k=m$, we obtain
\[
\PF_{2m+1}(G)
=
(\Delta_X * (\Delta_Y)^{[m-2]})
\cup
((\Delta_X)^{[m-1]} * (\Delta_Y)^{[m-1]})
\cup
((\Delta_X)^{[m-2]} * \Delta_Y).
\]

Since $|X|=m$, the $(m-1)$-skeleton of $\Delta_X$ is precisely
the $\Delta_X$. Thus
\[
(\Delta_X)^{[m-1]}
=
\Delta_X.
\]

Substituting this into the above decomposition gives
\[
\PF_{2m+1}(G)
=
(\Delta_X * (\Delta_Y)^{[m-2]})
\cup
(\Delta_X * (\Delta_Y)^{[m-1]})
\cup
((\Delta_X)^{[m-2]} * \Delta_Y).
\]

Now

\[
(\Delta_Y)^{[m-2]}
\subseteq
(\Delta_Y)^{[m-1]}.
\]

Therefore
\[
\Delta_X * (\Delta_Y)^{[m-2]}
\subseteq
\Delta_X * (\Delta_Y)^{[m-1]}.
\]

Hence
\[
(\Delta_X * (\Delta_Y)^{[m-2]})
\cup
(\Delta_X * (\Delta_Y)^{[m-1]})
=
(\Delta_X * (\Delta_Y)^{[m-1]}),
\]
and consequently
\[
\PF_{2m+1}(G)
=
(\Delta_X * (\Delta_{Y})^{[m-1]})
\cup
((\Delta_{X})^{[m-2]} * \Delta_Y).
\]
This completes the proof.
\end{proof}

Now we prove the main result of this Section. 

\begin{theorem}\label{homotopy-decomposition-cbg}
Let $G = K_{m,n}$ be a complete multipartite graph. For all $2\leq t\leq \lambda(K_{m,n})+1$ the path-free complexes 
$\PF_t(G)$ has the homotopy type of a wedge of spheres. 
\end{theorem}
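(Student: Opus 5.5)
The plan is to read off the homotopy type from the join decompositions of \Cref{lem: structure of the complex CBG1} and \Cref{lem: facets for t=2m+1}, using two standard facts.

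\begin{enumerate}
\item A skeleton $(\Delta_V)^{[d]}$ of a simplex is either the full simplex or homotopy equivalent to a wedge of $d$-spheres; it is also vertex decomposable, hence shellable.
\item By \Cref{lem: homology for join operation}, together with the fact that a join of wedges of spheres is a wedge of spheres, a join of such skeleta is again a wedge of spheres, or contractible.
\end{enumerate}

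If $t\ge 2m-1$, which is exactly $t\ge 2n_{m-1}-1$ for $K_{m,n}$, the result follows at once. By \Cref{them: char for CMG} the complex is shellable, so it is a wedge of spheres by \cite[Theorem 12.3]{kozlov2007}. This covers $t=2m+1$ as well. So I only need the range $t\le 2m-2$, where $k\le m-1$.

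\textbf{Even case, $t=2k$.} Write
\[
P=\Delta_X * (\Delta_Y)^{[k-2]},\qquad Q=(\Delta_X)^{[k-2]} * \Delta_Y .
\]
Both are cones, hence contractible, and
\[
P\cap Q=(\Delta_X)^{[k-2]} * (\Delta_Y)^{[k-2]} .
\]
Since $k-1<m\le n$, both skeleta are proper, so each is a wedge of $(k-2)$-spheres. Therefore $P\cap Q$ is a wedge of $(2k-3)$-spheres. A union of two contractible complexes is homotopy equivalent to the suspension of their intersection. Hence $\PF_t(G)\simeq\Sigma(P\cap Q)$, a wedge of $\binom{m-1}{k-1}\binom{n-1}{k-1}$ spheres of dimension $2k-2$.

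The degenerate case $k=1$ (that is, $t=2$) is handled separately. There $P\cap Q=\{\emptyset\}$ and $\PF_2(G)$ is the disjoint union $\Delta_X\sqcup\Delta_Y$, which is $S^0$.

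\textbf{Odd case, $t=2k+1\le 2m-1$.} Let $K=P\cup Q\simeq\Sigma(P\cap Q)$ as above. Then $\PF_t(G)$ is obtained from $K$ by attaching the $(2k-1)$-simplices $F=F_X\cup F_Y$ with $|F_X|=|F_Y|=k$. Each boundary
\[
\partial F=(\partial F_X * F_Y)\cup(F_X*\partial F_Y)
\]
already lies in $K$. The first piece lies in $Q$ and the second in $P$. Under $K\simeq\Sigma(P\cap Q)$, the attaching sphere $\partial F$ represents the class of the suspension of $\partial F_X*\partial F_Y$ in
\[
\widetilde H_{2k-2}(K)\cong \widetilde H_{k-2}\bigl((\Delta_X)^{[k-2]}\bigr)\otimes\widetilde H_{k-2}\bigl((\Delta_Y)^{[k-2]}\bigr).
\]

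The classes $[\partial F_X]$ generate $\widetilde H_{k-2}$ of the skeleton, and likewise for $Y$. So these attaching classes span $\widetilde H_{2k-2}(K)$, and the cellular boundary map from the new $(2k-1)$-cells is surjective. Consequently:
\begin{itemize}
\item $\widetilde H_{2k-2}(\PF_t(G))=0$;
\item $\widetilde H_{2k-1}(\PF_t(G))$ is the kernel of a map of free abelian groups, hence free;
\item all other reduced homology vanishes.
\end{itemize}
As a check, the rank of $\widetilde H_{2k-1}$ is
\[
\binom mk\binom nk-\binom{m-1}{k-1}\binom{n-1}{k-1}.
\]

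The complex is $(2k-3)$-connected, since $K$ is and cells of dimension $2k-1$ do not affect lower homotopy. For $k\ge2$ it is therefore simply connected. Its homology is free and concentrated in the top degree $2k-1$, which equals its dimension. By Hurewicz I can pick a map from a wedge of $(2k-1)$-spheres realizing a basis of this homology. This map induces a homology isomorphism between simply connected complexes, so by Whitehead it is a homotopy equivalence.

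The case $k=1$, i.e.\ $t=3$, I would treat directly. There $\PF_3(G)$ is the complete bipartite graph $K_{m,n}$ viewed as a $1$-complex, together with the vertex simplices coming from $\Delta_X*\{\emptyset\}$ and $\{\emptyset\}*\Delta_Y$. This is a connected graph up to contractible pieces, hence a wedge of circles.

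\textbf{Main obstacle.} The delicate step is the odd case: identifying the attaching classes in $\widetilde H_{2k-2}(K)$ precisely enough to prove surjectivity. Once that is done, the freeness of the cokernel and kernel, and hence the Moore-space/Whitehead conclusion, follow routinely. I would check the identification by tracking the Mayer--Vietoris connecting map for $K=P\cup Q$ applied to the cycle $\partial F$.
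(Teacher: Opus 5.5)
Your proof is correct, but it differs from the paper's in two places.

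\textbf{The range $t\ge 2m-1$.} You invoke \Cref{them: char for CMG} together with the wedge-of-spheres theorem for shellable complexes. The paper instead computes every case by hand:
\begin{itemize}
\item $t=2m$ via its even-case suspension formula;
\item $t=2m-1$ via its odd-case computation;
\item $t=2m+1$ as a union of two cones, giving $\bigvee_{\binom{n-1}{m}}S^{2m-1}$.
\end{itemize}
Your shortcut is legitimate but gives up the sphere counts in these cases. Your own even and odd arguments would handle $t=2m$ and $t=2m-1$ verbatim, so only $t=2m+1$ actually needs shellability.

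\textbf{The odd case.} The paper covers $\PF_{2k+1}(G)$ by $A$, $B$ and $M=(\Delta_X)^{[k-1]}*(\Delta_Y)^{[k-1]}$. It runs a three-set Mayer--Vietoris spectral sequence and obtains rank $\binom{m-1}{k}\binom{n-1}{k}+\binom{m-1}{k}\binom{n-1}{k-1}+\binom{m-1}{k-1}\binom{n-1}{k}$. You instead regard $\PF_{2k+1}(G)$ as $\PF_{2k}(G)$ with the balanced $(2k-1)$-simplices attached, and use the exact sequence of the pair.

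The step you flag as the obstacle does go through. The two-set Mayer--Vietoris boundary sends $[\partial F]$ to $\pm[\partial F_X*\partial F_Y]$. The classes $[\partial F_X]$ span $\widetilde H_{k-2}$ of the skeleton, because every $(k-2)$-cycle bounds in the full simplex. Your rank $\binom mk\binom nk-\binom{m-1}{k-1}\binom{n-1}{k-1}$ agrees with the paper's by Pascal's rule.

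\textbf{What each route buys.} Yours is more elementary: no spectral sequence and no bookkeeping of differentials. It also reads the answer as ``balanced cells minus the spheres they kill''. The paper's version splits the count into three geometrically labelled summands and gives explicit counts in all cases.

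\textbf{A small inaccuracy.} $2k-1$ is not the dimension of $\PF_{2k+1}(G)$; that dimension is $n+k-2$. Also, the complex is in fact $(2k-2)$-connected, since it contains the full $(2k-1)$-skeleton. Neither point affects your Hurewicz--Whitehead step, which only needs simple connectivity and free homology concentrated in a single degree $\ge 2$.
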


\begin{proof} First, we consider the even case. 
Let $2\leq t=2k\leq 2m$ be even. 
From Lemma~\ref{lem: structure of the complex CBG1} we know that the path-free complex is the union of the following two subcomplexes 
\[
    A =  (\Delta_X * (\Delta_{Y})^{[k-2]}),
    \quad
    B = ((\Delta_X)^{[k-2]} * \Delta_Y).
\]
Note that both $ A$ and $ B$ are contractible. 
Hence, by the homotopy pushout property for the union of two contractible subcomplexes \cite[Section 15.2]{kozlov2007},
\[A \cup B \simeq \Sigma (A \cap B). \]
The intersection of these subcomplexes is equal to
$(\Delta_X)^{[k-2]}\ast (\Delta_Y)^{[k-2]}. $
Recall that $i$-skeleton of an $l$-simplex is homotopic to a wedge $\binom{l}{i+1}$ many $i$-dimensional spheres. 
Using this in our context, we get
\begin{align*}
    \PF_{2k}(G) &\simeq \Sigma((\Delta_X)^{[k-2]}\ast (\Delta_Y)^{[k-2]})\\
        &\simeq \Sigma(\bigvee_{\binom{m-1}{k-1}} S^{k-2}\ast \bigvee_{\binom{n-1}{k-1}}S^{k-2}) \\
        &\simeq \Sigma(\bigvee_{\binom{m-1}{k-1}\binom{n-1}{k-1}} S^{2k-3})\\
        &\simeq \bigvee_{\binom{m-1}{k-1}\binom{n-1}{k-1}} S^{2k-2}.
\end{align*}
The case $t=2m+1$ is similar, since the path-free complex is a union of two contractible subcomplexes.  
We have,
    \[
    \PF_{2m+1}(G)
    \simeq
    \bigvee_{\binom{n-1}{m}} S^{2m-1}.
    \]
Finally, the odd case $2  < t = 2k+1 < 2m $. 
First note that every subset of size at most $t-1$ is automatically a face.
This means that the complex has the full $(t-2)$-dimensional skeleton.
Equivalently $\PF_{2k+1}(G)$ is $(t-3) = (2k-2)$-connected. 
Since $2k+1$ is at least $3$, in the odd case the path-free complex is always connected. 
We start with the smallest possible value, $t=3$. 
Using the join decomposition of Lemma~\ref{lem: structure of the complex CBG1} we get
\begin{align*}
    \PF_3(K_{m,n}) &= (\Delta_X\ast\emptyset)\cup((\Delta_X)^{[0]}\ast(\Delta_Y)^{[0]})\cup (\emptyset\ast\Delta_Y) \\
    &= \Delta_X \cup (K_{m,n}) \cup \Delta_Y\\
    &\simeq \bigvee_{mn-1} S^1. 
\end{align*}
Note that the simplices on $X, Y$ are contractible, hence can be homotoped to a point. 
We are left with two vertices and $mn$ edges between them. 

Finally, we consider the remaining values of $t$, where the path-free complexes are simply connected. 
For notational simplicity we write $A = (\Delta_X * (\Delta_{Y})^{[k-2]})$, $B = ((\Delta_{X})^{[k-2]} * \Delta_Y)$ and $M = ((\Delta_{X})^{[k-1]} * (\Delta_{Y})^{[k-1]})$. In this situation $M$ is not contractible, so we will use the Mayer-Vietoris spectral sequence (see \cite[Example 14.16]{BottTu1982} for more details). 
In order to set up the first page of this spectral sequence, we need to determine the homotopy type of each nonempty intersection of these spaces. 
Clearly $A, B$ are contractible and $M\simeq \bigvee_{\binom{m-1}{k}\binom{n-1}{k}}S^{2k-1}$. The homotopy type of intersections is given below
\begin{align*}
    A\cap B &\simeq \bigvee_{\binom{m-1}{k-1}\binom{n-1}{k-1}}S^{2k-3}\\
    A\cap M &\simeq \bigvee_{\binom{m-1}{k}\binom{n-1}{k-1}}S^{2k-2}\\
    B\cap M &\simeq \bigvee_{\binom{m-1}{k-1}\binom{n-1}{k}}S^{2k-2}.
\end{align*}
Finally, note that $A\cap B\cap M = A\cap B$. 
Hence, the only nonzero terms on the first page are as follows
\begin{align*}
    E^1_{0, 2k-1} &\cong \Tilde{H}_{2k-1}(M)\\
    E^1_{1, 2k-3} &\cong \Tilde{H}_{2k-3}(A\cap B)\\
    E^1_{1, 2k-2} &\cong \Tilde{H}_{2k-2}(A\cap M)\oplus \Tilde{H}_{2k-2}(B\cap M) \\
    E^1_{2, 2k-3} &\cong \Tilde{H}_{2k-3}(A\cap B\cap M)
\end{align*}
The only nonzero differential is $d_{2, 2k-3}: E^1_{2, 2k-3}\to E^1_{1, 2k-3}$, which is an isomorphism. 
Hence, the second page has the following nonzero entries:
\[E^2_{0, 2k-1} = E^1_{0, 2k-1},\quad E^2_{1, 2k-2} = E^1_{1, 2k-2}, \] and the other differentials are zero. 
Therefore, 2nd page is the infinity page and the spectral sequence abuts to the homology of the union. 
The total degree is $2k-1$ and the associated graded of $\Tilde{H}_{2k-1}(\PF_{2k+1})$ has exactly two pieces: $E^\infty_{0, 2k-1}$ and $E^\infty_{1, 2k-2}$. 
Since both of them are free abelian, the extension problem is trivial, and we have
\begin{align*}
    \Tilde{H}_{2k-1}(K_{m,n}) &= \Tilde{H}_{2k-1}(M)\oplus\Tilde{H}_{2k-2}(A\cap M)\oplus \Tilde{H}_{2k-2}(B\cap M)\\
    &= \bigoplus_{\binom{m-1}{k}\binom{n-1}{k}} \Z \oplus \left(\bigoplus_{\binom{m-1}{k}\binom{n-1}{k-1}} \Z\right) \oplus \left(\bigoplus_{\binom{m-1}{k-1}\binom{n-1}{k}} \Z\right).
\end{align*}

We have that the complex $\PF_{2k+1}(K_{m,n})$ is simply connected, it reduced homology vanishes except in degree $2k-1$. 
Moreover, in that degree the homology group is free abelian. 
Hence, by an application of Hurewicz' theorem and Whitehead's theorem (see \cite[Section 6.7]{kozlov2007}), we get that 
\[\PF_{2k+1}(K_{m,n}) \simeq \bigvee_{\beta} S^{2k-1},\]
where $\beta = \binom{m-1}{k}\binom{n-1}{k} + \binom{m-1}{k}\binom{n-1}{k-1} + \binom{m-1}{k-1}\binom{n-1}{k}$. 
This covers all the cases, and the theorem follows. 
\end{proof}

We now determine the homotopy type of $\PF_3(K_{n_1,\ldots,n_r})$.
The decomposition obtained from \Cref{lem:decomposition-CMG} enables an explicit computation of its homotopy type. 
In addition, we apply the \emph{multinerve theorem} (see \cite{multinerve01, ramras}) to deduce that this homotopy type is that of a wedge of circles.
Recall that the classical nerve theorem asserts that if a topological space admits a good open cover, then the nerve of this cover has the same homotopy type as the space itself. 
A cover is called good if each open set is contractible and every nonempty finite intersection is also contractible. 
In this setting, the nerve is a simplicial complex. 
The theorem has been generalized to encompass \emph{quasi-good} covers, in which intersections are allowed to be disconnected, under the condition that each of their connected components is contractible. 
In this more general situation, the nerve is in general a $\Delta$-complex. 
We now state precisely the version of the theorem that we will use.

\begin{theorem}[Corollary 3.10 of \cite{multinerve02}]
    Let $K$ be a simplicial complex and let $\mathcal{U} = \{L_i: i\in I\}$ be a quasi-good cover of $K$ by subcomplexes. Then the complex $K$ is (simply) homotopy equivalent to the nerve of $\mathcal{U}$. 
\end{theorem}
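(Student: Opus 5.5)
The plan is to compare $K$ and the (multi)nerve through an intermediate ``blow-up'' space that maps to both with contractible fibres; this is the usual proof of nerve theorems, adapted to components. First I would make the nerve precise, because for a quasi-good cover it is a $\Delta$-complex rather than a simplicial complex. Let $\mathcal{N}$ be the $\Delta$-complex whose $p$-simplices are pairs $(\sigma,C)$, where:
\begin{itemize}
\item $\sigma\subseteq I$ has $|\sigma|=p+1$ and $L_\sigma:=\bigcap_{i\in\sigma}L_i\neq\emptyset$;
\item $C$ is a connected component of $L_\sigma$.
\end{itemize}
If $\tau\subseteq\sigma$, then $L_\sigma\subseteq L_\tau$, so $C$ lies in a unique component $C_\tau$ of $L_\tau$. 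The face maps are $(\sigma,C)\mapsto(\tau,C_\tau)$. These are compatible with composition, so $\mathcal{N}$ is a well-defined $\Delta$-complex. Let $P$ be the poset of all such pairs, ordered by reverse inclusion. Then $\mathcal{N}$ and the order complex $\Delta(P)$ are homeomorphic via barycentric subdivision, and this subdivision is a simple homotopy equivalence.

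Next I would construct the blow-up
\[
Z=\bigcup_{(\sigma,C)\in P} C\times|\Delta^\sigma| \;\subseteq\; K\times|\Delta^I|,
\]
with the gluings forced by the face maps. Equivalently, $Z$ is the homotopy colimit of the diagram $(\sigma,C)\mapsto C$ over $P$, with inclusion maps. It comes with two projections, $p_1\colon Z\to K$ and $p_2\colon Z\to\mathcal{N}$.

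For $p_1$, fix $x\in K$. The fibre $p_1^{-1}(x)$ is the full simplex on the index set $\{i: x\in L_i\}$. Indeed, for each $\sigma$ in this set, $x$ lies in exactly one component of $L_\sigma$, so no simplex is doubled. Since the $L_i$ are subcomplexes, this fibre is constant on open cells of a common subdivision. I would then apply a Vietoris--Begle / carrier argument: induct over the cells of $K$, gluing along subcomplexes with contractible fibres. This shows $p_1$ is a homotopy equivalence; when everything is simplicial, it is even a simple homotopy equivalence.

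For $p_2$, the fibre over an interior point of the simplex $(\sigma,C)$ is $C$, which is contractible by the quasi-good hypothesis. I would argue by induction over the skeleta of $\mathcal{N}$. Passing from the $(p-1)$-skeleton to the $p$-skeleton, $Z$ changes by attaching $C\times|\Delta^p|$ along $C\times\partial|\Delta^p|$, while $\mathcal{N}$ changes by attaching $|\Delta^p|$ along $\partial|\Delta^p|$. Collapsing $C$ to a point gives a map of pushouts which is a homotopy equivalence on every piece, so the gluing lemma makes $p_2$ a homotopy equivalence. Combining the two projections gives $K\simeq Z\simeq\mathcal{N}$.

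The main obstacle is the simple-homotopy refinement rather than the bare homotopy equivalence. I would handle it by working entirely in the simplicial category. I would replace $Z$ by the order complex of the poset of pairs $(F,(\sigma,C))$, where $F$ is a face of $C$, and replace each projection by a poset map. I would then invoke Quillen's fibre lemma in its simple-homotopy form: a poset map whose fibres $\{y\le \varphi(x)\}$ are collapsible induces a simple homotopy equivalence. Simplices and cones are collapsible, so the fibres of the projection to $K$ qualify. The delicate step is the fibres of the projection to $P$. These are the components $C$, which are only contractible, so I would need either to assume collapsibility, as is typical in the cited source, or to check carefully that contractibility suffices in that version. I would consult \cite{multinerve02} exactly at that point.
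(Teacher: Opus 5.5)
The paper gives no proof of this statement; it imports it from Fern\'andez--Minian, where it comes out of a poset-theoretic argument (the cylinder of a relation and a relational version of Quillen's fibre lemma). Your route is different: the classical topological blow-up argument with two projections. For the plain homotopy equivalence it is correct.

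\begin{itemize}
\item \emph{The space $Z$.} Your $Z$ is just the usual blow-up $\bigcup_\sigma L_\sigma\times|\Delta^\sigma|\subseteq K\times|\Delta^I|$, because distinct components of $L_\sigma$ are disjoint. The components enter only through $p_2$.
\item \emph{The projection $p_1$.} Your cell-by-cell sketch works. For a simplex $s$ of $K$, set $I_s=\{i: s\in L_i\}$, which is nonempty because the $L_i$ cover $K$. Then $p_1^{-1}$ of the $q$-skeleton is obtained from $p_1^{-1}$ of the $(q-1)$-skeleton by attaching $s\times\Delta^{I_s}$ along $\partial s\times\Delta^{I_s}$ for each $q$-simplex $s$. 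The gluing lemma then applies.
\item \emph{The projection $p_2$.} Your skeletal induction over $\mathcal N$ is exactly right.
\item \emph{The object $\mathcal N$.} You are right to read ``nerve'' as the multinerve $\Delta$-complex. With the ordinary nerve the statement is false: a circle covered by two arcs meeting in two points has a contractible nerve.
\end{itemize}
Compared with the poset route, yours is more elementary and makes the role of the components transparent. It does not, however, deliver the simple-homotopy refinement for free.

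That refinement is the one step you leave open, deferring to the source. It does not require collapsible components, at least for finite $K$, which is where simple homotopy type is defined.
\begin{itemize}
\item A finite contractible complex $C$ has the simple homotopy type of a point, since the torsion of $C\to\mathrm{pt}$ lies in $\mathrm{Wh}(1)=0$.
\item By the product formula for Whitehead torsion, $C\times\Delta^p\to\Delta^p$ and $C\times\partial\Delta^p\to\partial\Delta^p$ are then simple homotopy equivalences.
\item In your induction, the closed simplex $(\sigma,C)$ embeds in $\mathcal N$ and meets the $(p-1)$-skeleton in its boundary. Likewise $C\times\Delta^p$ meets $Z_{p-1}$ exactly in $C\times\partial\Delta^p$.
\item The sum theorem for Whitehead torsion therefore applies at each step, and shows that $p_2$ is simple.
\item The same argument, with the collapsible fibres $\Delta^{I_s}$, shows that $p_1$ is simple.
\end{itemize}
So $K$ and $\mathcal N$ have the same simple homotopy type with merely contractible components, and your argument completed this way proves the full statement. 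In any case, the paper's application to $\mathrm{PF}_3(G)$ needs only the plain homotopy equivalence, which your sketch already establishes.
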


Now we explicitly characterize the homotopy type of $3$-path-free complexes.

\begin{theorem}
Let
\[
G=K_{n_1,n_2,\ldots,n_r},
\]
where $n_i\ge 1$ for all $1\le i\le r$. Then
\[
PF_3(G)
\simeq
\bigvee^{\,\sum_{1\le i<j\le r}n_in_j-r+1}S^1.
\]
\end{theorem}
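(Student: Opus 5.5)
For $t=3$ we have $k=1$, so $(\Delta_{W_j})^{[k-2]}=(\Delta_{W_j})^{[-1]}=\{\emptyset\}$ and $(\Delta_V)^{[t-2]}=(\Delta_V)^{[1]}$. \Cref{lem:decomposition-CMG} therefore gives
\[
\PF_3(G)=\Delta_{V_1}\cup\Delta_{V_2}\cup\cdots\cup\Delta_{V_r}\cup (\Delta_V)^{[1]}.
\]
An edge $\{u,v\}$ with $u,v$ in the same part already lies in the simplex on that part. Hence the complex is the union of the $r$ full simplices $\Delta_{V_i}$, which are pairwise vertex-disjoint, together with the edges of $G$ itself, namely the $\sum_{i<j}n_in_j$ edges between different parts. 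There are no higher faces meeting two parts: a set meeting two parts with at least three vertices contains a $P_3$.

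\textbf{Collapsing the parts.} Each $\Delta_{V_i}$ is a contractible subcomplex, and they are disjoint. Collapsing each to a point is a homotopy equivalence, since $(K,L)$ with $L$ a contractible subcomplex gives $K\simeq K/L$, applied iteratively. The quotient is a one-dimensional CW complex (a multigraph) with $r$ vertices, one for each part, and $\sum_{i<j}n_in_j$ edges. For parts $i\neq j$ there are $n_in_j$ parallel edges between the corresponding vertices.

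\textbf{Alternative via the multinerve theorem.} To match the paper's announced approach, take the cover $\{\Delta_{V_i}\}\cup\{e : e\in E(G)\}$. All pairwise intersections are empty or a single vertex, so the cover is quasi-good. Its nerve is a $\Delta$-complex with the same graph description: one vertex for each part $\Delta_{V_i}$, and each edge-element meets exactly two parts.

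\textbf{Counting.} The multigraph is connected because $r\ge2$ and all $n_in_j\ge1$; the degenerate case $r=1$ gives a simplex and $0$ circles, consistent with the formula. A connected graph is homotopy equivalent to a wedge of $|E|-|V|+1$ circles, which here is $\sum_{i<j}n_in_j-r+1$.

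The only real care point is verifying the face description for $t=3$. One must check that no face meets two parts in more than an edge, which follows from \Cref{lemma: key tech}: $3\le 2(s-M)+1$ iff $s-M\ge1$, for $s\ge3$. The rest is routine.
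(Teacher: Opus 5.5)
Your proof is correct, but after the shared first step it takes a different route from the paper. Both arguments start from $\PF_3(G)=\Delta_{V_1}\cup\cdots\cup\Delta_{V_r}\cup(\Delta_V)^{[1]}$.

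\textbf{Your route.} You quotient out the pairwise disjoint contractible simplices $\Delta_{V_i}$ one at a time, using $K\simeq K/L$ for a contractible subcomplex $L$ of a CW pair. This directly yields the multigraph with $r$ vertices and $n_in_j$ parallel edges between $i$ and $j$.

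\textbf{The paper's route.} The paper stellar-subdivides every mixed edge. It then covers the result by the subcomplexes $L_i$, each consisting of $\Delta_{V_i}$ together with the half-edges incident to $V_i$. It checks three things:
\begin{enumerate}
\item each $L_i$ collapses onto $\Delta_{V_i}$;
\item $L_i\cap L_j$ consists of $n_in_j$ isolated points;
\item triple intersections are empty.
\end{enumerate}
It then invokes the Fern\'andez--Minian multinerve theorem to identify the complex with the same multigraph.

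\textbf{Comparison.} Your argument is shorter and entirely elementary, and you also handle the degenerate case $r=1$, which the paper does not mention. The paper's version mainly serves to illustrate the quasi-good-cover machinery. In substance the two coincide: the multinerve of $\{L_i\}$ is exactly your quotient graph.

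\textbf{Problem with your alternative paragraph.} Your ``alternative via the multinerve theorem'' paragraph is wrong as written and should be dropped or repaired. The nerve of the cover $\{\Delta_{V_i}\}\cup\{e : e\in E(G)\}$ has $r+|E(G)|$ vertices, not $r$. Moreover, two edges of $G$ that share an endpoint $u$ intersect. So $\Delta_{V_i}$ (where $u \in V_i$) together with all edges through $u$ span a simplex of dimension $\deg_G(u)$ in the nerve. The nerve is therefore not the multigraph you describe.

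That cover is in fact good, so its nerve is simply another complex homotopy equivalent to $\PF_3(G)$, and nothing is gained. The paper avoids exactly this problem by subdividing the mixed edges. Then only $r$ cover elements are needed, and the multinerve is literally the multigraph. Since your collapsing argument does not use that paragraph, the proof stands.
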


\begin{proof}
By Lemma~\ref{lem:decomposition-CMG},
\[
PF_3(G)
=
(\Delta_V)^{[1]}
\;\cup\;
\bigcup_{i=1}^r\Delta_{V_i},
\]
where every simplex of dimension at least two is contained entirely in one of the simplices
$\Delta_{V_1},\ldots,\Delta_{V_r}$.

Perform a stellar subdivision (\cite[Definition 2.22]{kozlov2007}) of every mixed edge (that is, every edge joining two distinct parts) by introducing one new vertex. 
Since no mixed edge belongs to a simplex of dimension greater than one, each stellar subdivision is simply an edge subdivision, and these subdivisions are independent. 
Let $K'$ denote the resulting simplicial complex. Clearly,
$K'$ is a subdivision of $PF_3(G)$ and hence
\[
|K'|\cong |PF_3(G)|.
\]

For each $1\le i\le r$, let $L_i$ be the subcomplex of $K'$ consisting of
\begin{itemize}
    \item the simplex $\Delta_{V_i}$, and
    \item every subdivided half-edge having one endpoint in $V_i$.
\end{itemize}

We first verify the hypotheses of the multinerve theorem.

\smallskip

\noindent
\textbf{(i) Each $L_i$ is contractible.}
Every subdivided half-edge contains a subdivision vertex which is a free vertex in the subcomplex $L_i$. 
Consequently, these half-edges may be removed one at a time by elementary simplicial collapses, leaving only the simplex $\Delta_{V_i}$. Hence $L_i$ simplicially collapses onto $\Delta_{V_i}$ and is therefore contractible.

\smallskip

\noindent
\textbf{(ii) Pairwise intersections.}
For $i\neq j$, the intersection
$L_i\cap L_j$
consists precisely of the subdivision vertices of the mixed edges joining $V_i$ and $V_j$. Since there are exactly $n_in_j$ such edges,
$L_i\cap L_j$
has exactly $n_in_j$ connected components, each consisting of a single vertex. In particular, every connected component is contractible.

\smallskip

\noindent
\textbf{(iii) Higher intersections.}
Every subdivision vertex lies on exactly one subdivided mixed edge. Therefore no subdivision vertex belongs to three distinct subcomplexes, and hence
\[
L_{i_1}\cap L_{i_2}\cap L_{i_3}=\varnothing
\]
whenever $i_1,i_2,i_3$ are distinct.

Thus the collection
$\{L_1,\ldots,L_r\}$
satisfies the hypotheses of the multinerve theorem stated above.

The associated multinerve has one vertex corresponding to each subcomplex $L_i$. Moreover, each connected component of
$L_i\cap L_j$
contributes one edge of the multinerve. Consequently, the multinerve is precisely the multigraph having
$r$
vertices and
$n_in_j$
parallel edges joining the vertices corresponding to $V_i$ and $V_j$.

Since this multigraph is connected, it is homotopy equivalent to a wedge of
\[
\left(\sum_{1\le i<j\le r}n_in_j\right)-r+1
\]
circles, completing the proof.
\end{proof}

\begin{remark}
    Recall that the $r$-independence complex of a graph $G$ is a simplicial complex whose faces correspond to those subsets $W\subset V(G)$ such that each connected component of the induced subgraph $G[W]$ has at most $r$ vertices. 
    One can verify that $\PF_3(G)$ is also the $2$-independence complex of $G$. 

    It is now known that for a complete multipartite graph $G$, the $r$-independence complex has the homotopy type of a wedge of $(r-1)$-spheres \cite[Theorem 3.5]{DS21}. 
    The exact formula for the number of spheres in $\PF_3(G)$ matches with our calculation. 
\end{remark}

The two cases we dealt with in this Section and the main result of the previous Section point towards the following conjecture and question. 

\begin{conjecture}
Let $G=K_{n_1,\ldots,n_r}$ be a complete multipartite graph and let $t\ge 2$.
Then the path-free complex $\PF_t(G)$ is homotopy equivalent to a wedge
of spheres. 
\end{conjecture}

\begin{question}
For $t\geq n_{m-1}-1$, determine the number of spheres appearing in the wedge decomposition of $\PF_t(G)$. 
\end{question}

\bibliographystyle{plain}
\bibliography{references} 

@article{DT,
author = {Taylan, Demet},
year = {2016},
month = {11},
pages = {},
title = {Matching Trees for Simplicial Complexes and Homotopy Type of Devoid Complexes of Graphs},
volume = {33},
journal = {Order},
doi = {10.1007/s11083-015-9379-3}
}

@article{SM,
author = {Saeedi Madani, Sara and Kiani, Dariush and Terai, Naoki},
year = {2011},
month = {01},
pages = {353–363},
title = {Sequentially Cohen-Macaulay path ideals of cycles},
volume = {54},
journal = {Bulletin mathématiques de la Société des sciences mathématiques de Roumanie}
}

@article{RW,
 author = {Russ Woodroofe},
 journal = {Proceedings of the American Mathematical Society},
 number = {10},
 pages = {3235--3246},
 publisher = {American Mathematical Society},
 title = {VERTEX DECOMPOSABLE GRAPHS AND OBSTRUCTIONS TO SHELLABILITY},
 urldate = {2024-12-14},
 volume = {137},
 year = {2009}
}

@article{AKK,
author = {Das, Kanoy and Roy, Amit and Saha, Kamalesh},
year = {2024},
month = {05},
pages = {},
journal = {arXiv:2405.15897},
title = {On the path ideals of chordal graphs},
doi = {10.48550/arXiv.2405.15897}
}

@article{Bi,
author = {Bijender},
year = {2024},
month = {05},
pages = {},
title = {Vertex Decomposability of the Stanley–Reisner Complex of a Path Ideal},
volume = {47},
journal = {Bulletin of the Malaysian Mathematical Sciences Society},
doi = {10.1007/s40840-024-01699-z}
}

@article{RG,
author = {Ehrenborg, Richard and Hetyei, Gabor},
year = {2006},
month = {08},
pages = {906-923},
title = {The topology of the independence complex},
volume = {27},
journal = {Eur. J. Comb.},
doi = {10.1016/j.ejc.2005.04.010}
}

@article{AH,
author = {He, Jing and Tuyl, Adam},
year = {2009},
month = {02},
pages = {},
title = {Algebraic Properties of the Path Ideal of a Tree},
volume = {38},
journal = {Communications in Algebra},
doi = {10.1080/00927870902998166}
}

@article{Kak,
title = {Independence complexes of chordal graphs},
journal = {Discrete Mathematics},
volume = {310},
number = {15},
pages = {2204-2211},
year = {2010},
issn = {0012-365X},
doi = {https://doi.org/10.1016/j.disc.2010.04.021},
author = {Kazuhiro Kawamura}
}

@article{FAVT,
author = {Francisco, Christopher and Tuyl, Adam},
year = {2005},
month = {12},
pages = {},
title = {Sequentially Cohen-Macaulay Edge Ideals},
volume = {135},
journal = {Proceedings of the American Mathematical Society},
doi = {10.2307/20534833}
}

@book{kozlov2007,
  title={Combinatorial Algebraic Topology},
  author={Kozlov, D.},
  isbn={9783540719625},
  lccn={2007933072},
  series={Algorithms and Computation in Mathematics},
  url={https://books.google.co.in/books?id=BfBHAAAAQBAJ},
  year={2007},
  publisher={Springer Berlin Heidelberg}
}

@book{JJ08,
author = {Jonsson, Jakob},
year = {2008},
month = {01},
title = {Simplicial Complexes of Graphs},
volume = {1928},
isbn = {978-3-540-75858-7},
publisher={Sprinter},
journal = {Lecture Notes in Mathematics},
doi = {10.1007/978-3-540-75859-4}
}

@article{RW09,
author = {Woodroofe, Russ},
year = {2009},
month = {11},
title = {Chordal and Sequentially {C}ohen-{M}acaulay Clutters},
volume = {18},
journal = {The electronic journal of combinatorics},
doi = {10.37236/695}
}

@article{ConcaDeNegri1999,
  author  = {Conca, Aldo and De Negri, Emanuela},
  title   = {{$M$}-sequences, graph ideals and ladder ideals of linear type},
  journal = {Journal of Algebra},
  volume  = {211},
  number  = {2},
  pages   = {599--624},
  year    = {1999}
}

@article{VanTuylVillarreal2008,
  author  = {Van Tuyl, Adam and Villarreal, Rafael H.},
  title   = {Shellable graphs and sequentially {C}ohen--{M}acaulay bipartite graphs},
  journal = {Journal of Combinatorial Theory, Series A},
  volume  = {115},
  number  = {5},
  pages   = {799--814},
  year    = {2008}
}

@book{BottTu1982,
  author    = {Bott, Raoul and Tu, Loring W.},
  title     = {Differential Forms in Algebraic Topology},
  series    = {Graduate Texts in Mathematics},
  volume    = {82},
  publisher = {Springer-Verlag},
  year      = {1982}
}

@article {ramras,
    AUTHOR = {Ramras, Daniel A.},
     TITLE = {Variations on the nerve theorem},
   JOURNAL = {Discrete Comput. Geom.},
  FJOURNAL = {Discrete \& Computational Geometry. An International Journal
              of Mathematics and Computer Science},
    VOLUME = {75},
      YEAR = {2026},
    NUMBER = {3},
     PAGES = {871--903},
      ISSN = {0179-5376,1432-0444},
   MRCLASS = {55U10 (55P10 55Q99)},
  MRNUMBER = {5009944},
       DOI = {10.1007/s00454-025-00809-3},
       URL = {https://doi.org/10.1007/s00454-025-00809-3},
}

@article {multinerve02,
    AUTHOR = {Fern\'andez, Ximena and Minian, El\'ias Gabriel},
     TITLE = {The cylinder of a relation and generalized versions of the
              nerve theorem},
   JOURNAL = {Discrete Comput. Geom.},
  FJOURNAL = {Discrete \& Computational Geometry. An International Journal
              of Mathematics and Computer Science},
    VOLUME = {63},
      YEAR = {2020},
    NUMBER = {3},
     PAGES = {549--559},
      ISSN = {0179-5376,1432-0444},
   MRCLASS = {55U10 (06A07 18B35 55P10 57Q10)},
  MRNUMBER = {4074333},
MRREVIEWER = {Jimmie\ D.\ Lawson},
       DOI = {10.1007/s00454-018-0028-7},
       URL = {https://doi.org/10.1007/s00454-018-0028-7},
}

@article {multinerve01,
    AUTHOR = {Colin de Verdi\`ere, \'Eric and Ginot, Gr\'egory and Goaoc,
              Xavier},
     TITLE = {Helly numbers of acyclic families},
   JOURNAL = {Adv. Math.},
  FJOURNAL = {Advances in Mathematics},
    VOLUME = {253},
      YEAR = {2014},
     PAGES = {163--193},
      ISSN = {0001-8708,1090-2082},
   MRCLASS = {54H05 (52A35)},
  MRNUMBER = {3148550},
MRREVIEWER = {Mircea\ Balaj},
       DOI = {10.1016/j.aim.2013.11.004},
       URL = {https://doi.org/10.1016/j.aim.2013.11.004},
}

@article{DS21,
 author = {Deshpande, Priyavrat and Singh, Anurag},
 title = {Higher independence complexes of graphs and their homotopy types},
 fjournal = {Journal of the Ramanujan Mathematical Society},
 journal = {J. Ramanujan Math. Soc.},
 issn = {0970-1249},
 volume = {36},
 number = {1},
 pages = {53--71},
 year = {2021},
 language = {English},
 url = {www.mathjournals.org/jrms/2021-036-001/2021-036-001-007.html},
 zbMATH = {7376455},
 Zbl = {1469.05168}
}

@article{TMAB,
  title={Topological methods},
  author={Bj{\"o}rner, Anders},
  journal={Handbook of combinatorics},
  volume={2},
  pages={1819--1872},
  year={1995},
  publisher={Amsterdam}
}

\end{document}